\DeclareMathOperator{\End}{End}
\DeclareMathOperator{\Gal}{Gal}
\DeclareMathOperator{\rk}{rk}
\DeclareMathOperator{\Supp}{Supp}
\DeclareMathOperator{\Aut}{Aut}
\DeclareMathOperator{\ord}{ord}
\begin{document}
\newtheorem{thm}{Theorem}[section]
\newtheorem{lem}[thm]{Lemma}
\newtheorem{dfn}[thm]{Definition}
\newtheorem{cor}[thm]{Corollary}
\newtheorem{conj}[thm]{Conjecture}
\newtheorem{clm}[thm]{Claim}
\theoremstyle{remark}
\newtheorem{exm}[thm]{Example}
\newtheorem{rem}[thm]{Remark}
\def\N{{\mathbb N}}
\def\G{{\mathbb G}}
\def\Q{{\mathbb Q}}
\def\R{{\mathbb R}}
\def\C{{\mathbb C}}
\def\P{{\mathbb P}}
\def\Z{{\mathbb Z}}
\def\v{{\mathbf v}}
\def\x{{\mathbf x}}
\def\O{{\mathcal O}}
\def\M{{\mathcal M}}
\def\kbar{{\bar{k}}}
\def\tr{\mbox{Tr}}
\def\id{\mbox{id}}
\def\qed{{\tiny $\clubsuit$ \normalsize}}

\renewcommand{\theenumi}{\alph{enumi}}

\title{Ideals of degree one contribute most of the height}

\author{Aaron Levin}
\author{David McKinnon}

\begin{abstract}
Let $k$ be a number field, $f(x)\in k[x]$ a polynomial over $k$ with $f(0)\neq 0$, and $\O_{k,S}^*$ the group of $S$-units of $k$, where $S$ is an appropriate finite set of places of $k$.
In this note, we prove that outside of some natural exceptional set $T\subset \O_{k,S}^*$, the prime ideals of $\O_k$ dividing
$f(u)$, $u\in \O_{k,S}^*\setminus T$, mostly have degree one over $\Q$; that
is, the corresponding residue fields have degree one over the prime
field.  We also formulate a conjectural analogue of this result for
rational points on an elliptic curve over a number field, and
deduce our conjecture from Vojta's Conjecture.  We prove this
conjectural analogue in certain cases when the elliptic curve has
complex multiplication.
\end{abstract}

\maketitle

\vspace{-5pt}
\section{Introduction}

If $a$ is an algebraic integer in a number field $k$ and $f(x)\in \O_k[x]$ a polynomial,
then the ideals dividing $f(a)$ are simply the ideals $I$ such that
$f(a)\equiv 0\pmod I$.  Heuristically, the larger the cardinality of the
residue ring $\O_k/I$, the smaller the probability that $f(a)$ and
$0$ are the same.

The purpose of this paper is to make this notion precise, to
generalise it, and to prove it in the case described above.  More
specifically, in Theorem~\ref{main}, using a result of Corvaja and
Zannier, we prove a precise version of this notion for $\mathbb{G}_m$,
and in Theorem~\ref{ellmain}, we state a conjectural analogue of
Theorem~\ref{main} for elliptic curves over a number field, and show
that it is a consequence of Vojta's Conjecture (\cite{Vo}).

A theorem of the second author proves Vojta's Conjecture in a relevant
special case, and we deduce an unconditional version of
Theorem~\ref{ellmain} in that case.  Specifically, if the elliptic
curve $E/k$ has complex multiplication, and if the algebraic point $P$
is defined over the compositum of $k$ with $\mbox{End}(E)\otimes\Q$,
then we can deduce Theorem~\ref{ellmain} without the hypothesis that
Vojta's Conjectures are true.

\section{Main Theorem}

Let $f(x)\in k[x]$ be a polynomial over a number field $k$ and $p\in \mathbb{Z}$ a prime.  The heuristic mentioned in the introduction suggests that a prime $\mathfrak{p}$ of $k$ lying above $p$ is likelier to divide $f(a)$ for $a\in k$ if the residue field $\O_k/\mathfrak{p}$ is small.  Our main theorem will give one possible precise interpretation of this notion, where we view $\O_k/\mathfrak{p}$ as being small if $\O_k/\mathfrak{p}$ has degree one over its prime field.  There is, however, an obvious way that our heuristic can fail.  Suppose, for example, that $f$ and $a$, and hence $f(a)$, are actually defined over a proper subfield $k'$ of $k$.  Then the size of $\O_{k'}/(\mathfrak{p}\cap \O_{k'})$, and not $\O_k/\mathfrak{p}$, is clearly the relevant quantity.  In the simplest case, when $k/\mathbb{Q}$ is Galois and $f$ is irreducible, our main theorem says, in essence, that for $S$-units $u$ of $k$ this is in fact the only way our heuristic can fail, i.e., $f(u)$ is ``mostly'' supported on primes of $k$ of degree one over $\mathbb{Q}$ unless $f(u)$ is rational, in an appropriate sense, over a proper subfield of $k$.

The statement of the main theorem requires a fair amount of notation.
We summarize this notation as follows:\\

\begin{tabular}{ll}
$k$ & Extension of $\Q$ of degree $d\neq 1$ \\
$L$ & Galois closure of $k$ over $\mathbb{Q}$\\
$\Gal(L/\Q)$ & The Galois group of $L$ over $\mathbb{Q}$\\
$\O_k$ & Ring of integers of $k$ \\
$f(x)$ & Nonconstant polynomial in $\O_k[x]$ with $f(0)\neq 0$ \\
$f_1,\ldots,f_N$ & The monic irreducible factors of $f$ over $L$\\
$S$ & Finite set of places of $k$ containing the archimedean places\\ & such that if $v\in S$ and $v$ and $v'$ lie above the same rational\\ & prime $p\in \Z$ then $v'\in S$.\\
$\O_{k,S}$ & Ring of $S$-integers of $k$ \\
$\O_{k,S}^*$ & Group of $S$-units of $k$\\
$\tau$ & The involution $\tau(u)=u^{-1}$ of $\O_{k,S}^*$.\\
$\O_{k,S}^{*\phi}$ & For a homomorphism $\phi$, the subgroup of $\O_{k,S}^*$ consisting of \\& elements $u$ such that $\phi(u)=u$.\\
$I(f(u))$ & $f(u)\O_{k,S}$, for $u\in \O_{k,S}^*$; the smallest ideal of $\O_{k,S}$ such that \\ 
 & $f(u)\equiv 0\pmod{I(f(u))}$ \\ 
$J(f(u))$ & \parbox[t]{6in}{Smallest ideal dividing $I(f(u))$ such that for every prime \\
$\mathfrak{p}$ dividing $J(f(u))$, $\O_{k,S}/\mathfrak{p}$ has degree greater than one \\ over the prime
field} \\
$N(I)$ & The norm of $I$ over $\Q$, for any ideal $I$ of $\O_k$ or $\O_{k,S}$\\
$H_k(x)$ & The relative multiplicative Weil height of $x\in k$\\
$H(x)$ & The absolute multiplicative Weil height of $x$,\\
&equal to $H_k(x)^{1/d}$ for $x\in k$\\
$h(x)$ & The absolute logarithmic Weil height of $x$, equal to $\log H(x)$
\end{tabular}

\vspace{.1in}

We can now state the main theorem:

\begin{thm}\label{main}
Let $\epsilon>0$.  Let $f(x)\in \O_k[x]$ satisfy $f(0)\neq 0$.  Then there exists a finite set of places $S'$ of $L$ such that for every $u\in \O_{k,S}^*$ either
\begin{enumerate}
\item
\label{maineqn}
\begin{equation*}
N(J(f(u)))<H(u)^{\epsilon}
\end{equation*}
or\\
\item
for some $i$,
\begin{equation}
\label{rat}
f_i(u)\O_{L,S'}=\alpha \O_{L,S'}
\end{equation}
for some $\alpha$ that lies in a proper subfield of $L$ not containing $k$ (in particular, if $k/\Q$ is Galois, $\alpha$ lies in a proper subfield of $k$).
\end{enumerate}

With the exception of finitely many elements, the set of elements in $\O_{k,S}^*$ not satisfying \eqref{maineqn} is contained in a finite union of cosets in $\O_{k,S}^*$ of the form
\begin{equation*}
T=u_1\O_{k,S}^{*\sigma_1}\cup\cdots\cup u_{m'}\O_{k,S}^{*\sigma_{m'}}\cup u_{m'+1}\O_{k,S}^{*\sigma_{m'+1}\tau}\cup\cdots\cup u_{m}\O_{k,S}^{*\sigma_{m}\tau},
\end{equation*}
where $u_1,\ldots, u_m\in \O_{k,S}^*$ and $\sigma_1,\ldots,\sigma_{m}\in \Gal(L/\Q)\setminus \Gal(L/k)$ (not necessarily distinct) are effectively computable.
\end{thm}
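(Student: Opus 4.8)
The plan is to reduce both assertions to the greatest common divisor theorem of Corvaja and Zannier at $S$-unit points, applied separately to the monic irreducible factors $f_1,\dots,f_N$ of $f$ over $L$, with the arithmetic entering through Frobenius elements at primes of residue degree $\ge 2$. First I would fix a $\Gal(L/\Q)$-stable finite set $S'$ of places of $L$ that is ``large enough'': it contains every place of $L$ lying over $S$ or over a rational prime ramified in $L$; the leading coefficient, the constant term and the discriminant of each $f_i$, as well as all of the (nonzero) resultants occurring below, are $S'$-units; and, for every $\sigma\in\Gal(L/\Q)$, the ideal class group of the fixed field $L^{\langle\sigma\rangle}$ becomes trivial after the primes of $S'$ are inverted. (One further finite set of places will be adjoined to $S'$ at the end; since $J(f(u))\mid f(u)\O_{k,S}$ only involves primes outside $S$, enlarging $S'$ is harmless.) A routine local computation, together with a Ridout-type consequence of the Subspace Theorem handling the finitely many primes of bad reduction --- an $S$-unit of $k$ cannot be $\mathfrak p$-adically close to a root of $f$ relative to its height, outside a finite set of $u$ --- then shows that, for $u\in\O_{k,S}^*$ away from a finite set, $\log N(J(f(u)))$ is bounded, up to $\frac12\epsilon\,h(u)$, by $C_0\sum_{\mathfrak P}\ord_{\mathfrak P}(f(u))\log N\mathfrak P$, the sum over primes $\mathfrak P$ of $L$ not over $S$ whose decomposition group $D(\mathfrak P)$ is not contained in $\Gal(L/k)$ --- the latter condition being exactly that the residue field of $\mathfrak P\cap\O_k$ has degree $\ge 2$ over $\Q$.

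For each such $\mathfrak P$ I would choose $\sigma=\sigma_{\mathfrak P}\in D(\mathfrak P)$ reducing to a generator of $\Gal(\mathbb F_{\mathfrak P}/\mathbb F_p)$; since that residue degree is $\ge 2$ the reduction of $\sigma$ is nontrivial on $\mathbb F_{\mathfrak P\cap\O_k}$, so $\sigma\notin\Gal(L/k)$. Writing $f=c\prod_i f_i^{e_i}$ with $c$ an $S'$-unit, $\mathfrak P$ divides some $f_i(u)$, and, as $\sigma$ fixes $\mathfrak P$ and acts as the $p$-th power map on the residue field, it divides $\sigma(f_i(u))=(\sigma f_i)(u^{\sigma})$ to the same order. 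Summing over $\mathfrak P$ and grouping by the finitely many pairs $(i,\sigma)$ with $1\le i\le N$ and $\sigma\in\Gal(L/\Q)\setminus\Gal(L/k)$ gives, off a finite set of $u$,
\[
\log N\bigl(J(f(u))\bigr)\ \le\ C_1\sum_{i,\sigma}\log N\Bigl(\gcd\bigl(f_i(u)\,\O_{L,S'},\ \sigma(f_i(u))\,\O_{L,S'}\bigr)\Bigr)\ +\ \tfrac12\epsilon\,h(u),
\]
with $C_1=C_1(f)$. Hence if \eqref{maineqn} fails and $h(u)$ is not bounded, then for some pair $(i,\sigma)$ the corresponding term is $\ge C_2\,h(u)$ with $C_2=C_2(f,\epsilon)>0$.

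Fix such a pair and set $v=u^{\sigma}$: $(u,v)$ is a pair of $S'$-units with $h(v)=h(u)$ and $\log N(\gcd(f_i(u),(\sigma f_i)(v)))\ge C_2 h(u)$. The polynomials $f_i(x)$ and $(\sigma f_i)(y)$ lie in disjoint variables, hence are coprime, so the Corvaja--Zannier gcd theorem applies (with $\epsilon'=C_2/2$) and produces an effectively describable finite union $Z_{i,\sigma}$ of translates of proper subtori of $\G_m^2$ such that this inequality forces either $h(u)$ bounded --- so $u$ lies in a finite set $F$ --- or $(u,u^{\sigma})\in Z_{i,\sigma}$. A point of $Z_{i,\sigma}$ satisfies a relation $u^{a}\sigma(u)^{b}=\gamma$ with $(a,b)\ne(0,0)$ and $\gamma$ from a finite list; applying $\sigma$ repeatedly and reducing the resulting circulant $\Z$-linear system --- using $h(u^{\sigma})=h(u)$ and dividing out $\gcd(a,b)$ --- shows that, after enlarging $F$ by a finite set, $(a,b)=\pm(1,-1)$ or $(a,b)=\pm(1,1)$, i.e.\ $u^{\sigma}u^{-1}$ or $u^{\sigma}u$ equals one of finitely many $S'$-units $\lambda$. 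Thus $u$ lies in a coset $u_\ell\,\O_{k,S}^{*\sigma}$ or $u_\ell\,\O_{k,S}^{*\sigma\tau}$ with $\sigma\notin\Gal(L/k)$; letting $T$ be the union of all such cosets over all $(i,\sigma)$ --- finite and effectively computable --- proves the last assertion of the theorem, with exceptional set $F$.

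It remains to check that every $u$ failing \eqref{maineqn} also satisfies \eqref{rat}. If $u\in F$, adjoin to $S'$ (the final enlargement) all primes of $L$ dividing $f_1(u)$ for $u\in F$; then $f_1(u)\O_{L,S'}=\O_{L,S'}=1\cdot\O_{L,S'}$, and $1$ lies in the proper subfield $\Q$ of $L$, which does not contain $k$ as $d\ne 1$ --- so \eqref{rat} holds. If $u\notin F$ it lies in a coset of $T$, say $u^{\sigma}=\lambda u$ for the relevant index $i$ (the case $u^{\sigma}u=\lambda$ is identical after replacing $(\sigma f_i)(\lambda x)$ by its reciprocal $x^{\deg f_i}(\sigma f_i)(\lambda/x)$, again irreducible over $L$ up to a constant). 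Then $\sigma(f_i(u))=(\sigma f_i)(\lambda u)$, and $C_2 h(u)\le\log N(\gcd(f_i(u),(\sigma f_i)(\lambda u)))$ is impossible when $f_i(x)$ and $(\sigma f_i)(\lambda x)$ are coprime (their resultant is a fixed $S'$-unit); since $f_i$ is \emph{irreducible} over $L$ and two $L$-irreducible polynomials of the same degree are coprime unless proportional, we must have $(\sigma f_i)(\lambda x)=\lambda^{\deg f_i}f_i(x)$, whence $\sigma(f_i(u))/f_i(u)=\lambda^{\deg f_i}\in\O_{L,S'}^*$. Therefore $f_i(u)\O_{L,S'}$ is $\langle\sigma\rangle$-stable, and by the choices of $S'$ it is generated by an element of $L^{\langle\sigma\rangle}$, a proper subfield of $L$ not containing $k$ --- which is \eqref{rat}. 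The hard part, beyond quoting the Corvaja--Zannier theorem in precisely this bivariate polynomial form with an effective exceptional set, is the bookkeeping that every translate of a subtorus, intersected with the twisted diagonal $\{(u,u^{\sigma})\}$, collapses to one of the families $u_\ell\O_{k,S}^{*\sigma}$, $u_\ell\O_{k,S}^{*\sigma\tau}$ with nothing spurious left over, together with the observation --- the one genuinely using irreducibility of the $f_i$ over $L$ --- that lying in such a coset is the same as the defining polynomials being proportional, hence the same as \eqref{rat}.
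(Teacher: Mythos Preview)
Your proposal is correct and follows essentially the same route as the paper: both arguments lift $J(f(u))$ to $L$, observe that every contributing prime $\mathfrak P$ is fixed by some $\sigma\in\Gal(L/\Q)\setminus\Gal(L/k)$, feed the resulting inequality $\sum_v\log^-\max\{|f(u)|_v,|f^{\sigma}(u^{\sigma})|_v\}\le -c\,h(u)$ into the Corvaja--Zannier lemma, and then use the iteration $u^{b^\ell-(-a)^\ell}=1$ (your ``circulant'' reduction) to force $a=\pm b$ and hence land in cosets of $\O_{k,S}^{*\sigma}$ or $\O_{k,S}^{*\sigma\tau}$. The only substantive difference is in the derivation of \eqref{rat}: the paper shows directly that $f_j(u_iu)/u_i^{\deg f_j}$ (resp.\ $(u'_l/u)^{d/2}f_j(u_iu)/f_j(u_iu'_l)$ in the $\tau$-case) is literally an element of the fixed field $L^{\langle\sigma\rangle}$, whereas you only show the ideal $f_i(u)\O_{L,S'}$ is $\langle\sigma\rangle$-stable and then invoke triviality of the $S'$-class group of $L^{\langle\sigma\rangle}$ to produce a generator there---both are fine, but the paper's version avoids the extra hypothesis on $S'$. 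Your explicit treatment of the finite exceptional set $F$ (enlarging $S'$ so that $f_1(u)\O_{L,S'}=\O_{L,S'}$ for $u\in F$) makes precise a step the paper leaves implicit.
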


An alternative formulation of Theorem \ref{main} involving only heights is given in Corollary \ref{heightmain}.

Note that $H(f(u))\ll H(u)^{\deg f}$ and that
\begin{equation*}
H_k(f(u))=C_uN(I(f(u)))=C_uN(J(f(u)))N(I(f(u))/J(f(u))), 
\end{equation*}
where $C_u$ is a real number (roughly equal to the archimedean part of the height of
$f(u)^{-1}$) satisfying $C_u\ll H(u)^\epsilon$ (see Lemma \ref{Roth}).  Thus, Theorem~\ref{main}
implies that for $u\in \O_{k,S}^*\setminus T$, $f(u)$ is ``mostly'' supported on primes of $k$ of degree one
over $\Q$.

We mention also that the group $\O_{k,S}^{*\sigma_i}$ is the same as $\O_{F,S_F}^*$, where $F$ is the fixed field of $\sigma_i$ and $S_F$ is the set of places of $F$ lying below places of $S$.
\vspace{.1in}

Before we begin the proof, we introduce some
notation.  For a number field $k$ we denote the set of inequivalent places of $k$ by $M_k$.
We define the function $\log^-$ for positive real numbers $x$ by
$\log^-(x)=\min\{0,\log(x)\}$.  For a place $v\in M_k$, we normalize
the corresponding absolute value $|\cdot|_v$ in such a way that the
product formula holds and $H(x)=\prod_{v\in M_k}\max\{1,|x|_v\}$.\\

\noindent
{\it Proof:} \/ Consider the set
\begin{equation*}
U=\{u\in \O_{k,S}^*\mid N(J(f(u)))\geq H(u)^\epsilon\}.
\end{equation*}
Let $L$ be a Galois closure of $k$ over $\Q$.  Let $\mathfrak{p}$ be a prime of $\O_k$ of inertia degree greater than one over $\mathbb{Q}$, lying above a rational prime $p\in \Z$.  Let $\mathfrak{q}$ be a prime of $\O_L$ lying above $\mathfrak{p}$.  Then $\mathfrak{q}$ again has inertia degree greater than one over $\mathbb{Q}$.  Let $D=D(\mathfrak{q}/p)\subset \Gal(L/\mathbb{Q})$ be the decomposition group of $\mathfrak{q}$ and let $L^D$ be the decomposition field.  Then $k\not\subset L^D$ since $\mathfrak{p}$ has inertia degree greater than one.  It follows that there exists $\sigma\in \Gal(L/\mathbb{Q})$ such that $\sigma(\mathfrak{q})=\mathfrak{q}$, $\sigma\not\in \Gal(L/k)$.  

Let $S_L$ be the set of places of $L$ lying above places of $S$.  Let $J'(f(u))=J(f(u))\O_{L,S_L}$.  Let $\mathfrak{q}$ be a prime of $\O_{L,S_L}$ dividing $J'(f(u))$.  From the above discussion and the definition of $J(f(u))$, there exists an element $\sigma\in \Gal(L/\mathbb{Q})\setminus \Gal(L/k)$ such that $\sigma(\mathfrak{q})=\mathfrak{q}$.  Let $\Gal(L/\mathbb{Q})\setminus \Gal(L/k)=\{\sigma_1,\ldots,\sigma_m\}$.  For $i=1,\ldots, m$, define the ideal $J_i'(f(u))$ to be the smallest ideal of $\O_{L,S_L}$ dividing $J'(f(u))$ such that
$\sigma_i(J_i'(f(u)))=J_i'(f(u))$.  Then $J'(f(u))$ divides $J_1'(f(u))\cdots J_m'(f(u))$.  Note also that $N(J'(f(u)))\geq N(J(f(u)))$.  Let
\begin{equation*}
U_i=\{u\in U\mid N(J_i'(f(u))) \geq H(u)^{\epsilon/m}\}.
\end{equation*}
Then clearly $U\subset \cup_{i=1}^m U_i$.

Let $r\in \{1,\ldots, m\}$.  By definition, $J_r'(f(u))$ divides both $f(u)\O_{L,S_L}$ and $f^{\sigma_r}(\sigma_r(u))\O_{L,S_L}$ for
all $u$ (where $f^{\sigma_r}$ denotes the image of $f$ under the natural action of $\sigma_r$).  For $u\in U_r$, we therefore obtain:
\begin{align*}
[L:\mathbb{Q}]\Sigma_{v\in M_L} \log^-\max\{|f(u)|_v,|f^{\sigma_r}(\sigma_r(u))|_v\}&\leq - \log N(J_r'(f(u))) \\
&\leq -\log H(u)^{\epsilon/m}\\
&\leq -\frac{\epsilon}{m} h(u).
\end{align*}

Theorem~\ref{main} will follow essentially from Proposition~4 of \cite{CZ}:

\begin{lem}[Corvaja, Zannier]\label{vojta}
Let $f(x),g(x)\in L[x]$ be polynomials that do not
vanish at $x=0$.  Then for every $\epsilon>0$, all but finitely many
solutions $(u,u')\in(\O_{L,S_L}^*)^2$ to the inequality
\[\sum_{v\in M_L} \log^-\max\{|f(u)|_v,|g(u')|_v\} < -\epsilon 
(\max\{h(u),h(u')\})\] 
are contained in finitely many effectively computable translates of one-dimensional subgroups
of $\G_m^2$.
\end{lem}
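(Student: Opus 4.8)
The plan is to recognize the left‑hand side as (minus) a generalized greatest common divisor and to deduce the statement from Corvaja and Zannier's GCD theorem, whose proof rests on the Subspace Theorem. First I would observe that $-\log^-\max\{|f(u)|_v,|g(u')|_v\}=\max\{0,\min\{-\log|f(u)|_v,-\log|g(u')|_v\}\}$, so that $-\sum_{v\in M_L}\log^-\max\{|f(u)|_v,|g(u')|_v\}$ is, up to a bounded term, the proximity of the point $(u,u')\in\G_m^2$ to the zero‑dimensional subscheme $Z=V(f(x))\cap V(g(y))$, i.e.\ the logarithmic generalized gcd of $f(u)$ and $g(u')$. Factoring $f=c\prod_i(x-\alpha_i)^{m_i}$ and $g=c'\prod_j(y-\beta_j)^{n_j}$ over $\bar{L}$ and enlarging $S$ so that all the $\alpha_i,\beta_j$ and leading coefficients are $S$‑units and the roots remain distinct modulo every place outside $S$, this proximity is, up to a bounded factor depending only on the multiplicities, at most $\sum_{i,j}$ of the generalized gcd of $u-\alpha_i$ and $u'-\beta_j$. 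Hence if the inequality of the lemma holds with $\epsilon$, then for some fixed pair $(\alpha,\beta)=(\alpha_i,\beta_j)$ the generalized gcd of $u-\alpha$ and $u'-\beta$ is $\gg\epsilon\max\{h(u),h(u')\}$; substituting $u=\alpha w$, $u'=\beta w'$, so that $w,w'$ are $S$‑units in $L'=L(\alpha,\beta)$ and heights change by $O(1)$, turns this into: the generalized gcd of $w-1$ and $w'-1$ exceeds $\epsilon'\max\{h(w),h(w')\}$ for a fixed $\epsilon'>0$.

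Next I would invoke (and sketch the proof of) the theorem of Corvaja--Zannier that the set of $S$‑unit pairs $(w,w')$ for which the gcd of $w-1$ and $w'-1$ exceeds $\epsilon'\max\{h(w),h(w')\}$ is, outside a finite set, contained in finitely many translates of one‑dimensional subgroups of $\G_m^2$ --- equivalently, the case of Vojta's Conjecture for the blow‑up of $\G_m^2$ at $(1,1)$. The proof is an application of the Subspace Theorem. The key input is that if a place $\mathfrak q$ outside the relevant finite set divides $\gcd(w-1,w'-1)$, then every polynomial $\phi\in\O_{L'}[x^{\pm1},y^{\pm1}]$ vanishing to order $e$ at $(1,1)$ has $\phi(w,w')$ divisible by the $e$‑th power of that local gcd (in particular $w^aw'^b\equiv1$ to that order, for all $a,b$). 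One then fixes a large integer $N$, takes $V$ to be a suitable $D$‑dimensional space of such polynomials of degree $\le N$, forms the point $\mathbf z=(\phi_i(w,w'))_i$ with $\{\phi_i\}$ a basis of $V$, and applies the Subspace Theorem: at the finitely many places of $S$ one chooses the linear forms by the filtration of $V$ according to the order of vanishing along the boundary component of $\G_m^2$ that is dominant there (after a pigeonhole over the sequence to fix that component at each such place), while at the remaining places the coordinate forms are used and the divisibility above is fed in. Letting $N\to\infty$ and balancing the degree of the polynomials, the dimension $D$, the orders of vanishing, and the gain coming from the gcd hypothesis, one obtains a violation of the Subspace Theorem unless $\mathbf z$ lies in one of finitely many hyperplanes, i.e.\ unless $(w,w')$ lies on one of finitely many curves $P_k(x,y)=0$ in $\G_m^2$.

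Finally I would conclude as follows. Each curve $P_k=0$ that contains infinitely many of the $S$‑unit solutions must contain a translate of a one‑dimensional subgroup of $\G_m^2$ (Lang; Liardet; Laurent), and the complement in $P_k=0$ of its finitely many such translates carries only finitely many $S$‑unit points by Siegel's theorem / the $S$‑unit equation theorem; hence all but finitely many of the reduced solutions $(w,w')$ lie in a finite union of translates of one‑dimensional subgroups, and translating back by $(\alpha,\beta)$ (an automorphism of $\G_m^2$, hence mapping translates of subgroups to translates of subgroups) and taking the union over the finitely many pairs $(\alpha_i,\beta_j)$ returns a finite union of translates of one‑dimensional subgroups containing all but finitely many $(u,u')$. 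Effectivity of the subgroups --- though not of the finite exceptional set, the Subspace Theorem being ineffective there --- comes from the fact that on a fixed translate of a one‑dimensional subgroup the gcd condition becomes a one‑variable statement, which by the effective lower bound of Corvaja--Zannier for the height of a rational function at $S$‑unit points (or by Baker's method) forces the parameter into an effectively computable finite set except when the subgroup has one of finitely many effectively listed shapes determined by $f$ and $g$.

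The main obstacle is the Subspace Theorem step: constructing the space $V$ and the systems of linear forms, and carrying out the combinatorial optimization so that the gcd gain outweighs the $D+\delta$ threshold --- in particular handling the fact that the places dividing the gcd vary with the solution and so cannot simply be absorbed into the finite set of places to which the Subspace Theorem is applied. This balancing is the delicate technical heart of the Corvaja--Zannier method, and everything else in the argument is reduction, pigeonholing, and standard facts about $S$‑unit points on subvarieties of $\G_m^2$.
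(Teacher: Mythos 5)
You should first be aware that the paper does not prove this lemma at all: it is quoted, essentially verbatim, as Proposition~4 of Corvaja--Zannier \cite{CZ}, and the authors' contribution lies in how they apply it, not in its proof. So your write-up is in effect an attempt to reconstruct the proof from \cite{CZ}, not an alternative to anything in this paper. Your reduction steps are sound and are indeed the standard ones: rewriting $-\sum_{v}\log^-\max\{|f(u)|_v,|g(u')|_v\}$ as the generalized gcd, i.e.\ the proximity to $V(f(x))\cap V(g(y))$; factoring $f$ and $g$, enlarging $S$, and bounding this proximity by the sum over pairs of roots; pigeonholing to a single pair $(\alpha,\beta)$ and substituting $u=\alpha w$, $u'=\beta w'$ to reduce to the gcd of $w-1$ and $w'-1$; and transporting translates of one-dimensional subgroups back through the automorphism $(x,y)\mapsto(\alpha x,\beta y)$. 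One point you gloss over: the sum in the lemma runs over \emph{all} places, so you need the all-places (``Vojta for the blow-up at $(1,1)$'') form of the gcd statement, or else you must first discard the contribution of the places in $S$; for $S$-units this can be done, with finitely many exceptions, exactly as in Lemma~\ref{Roth} of the paper (Roth applied to $D+0+\infty$), and it deserves a sentence rather than silence.

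The genuine gap is that the heart of the argument is missing. Everything you invoke ultimately rests on the Corvaja--Zannier gcd theorem for $S$-unit points, and your account of its proof --- the construction of the space $V$ of Laurent polynomials vanishing to high order at $(1,1)$, the filtrations and choices of linear forms at the places of $S$, the conversion of divisibility at the solution-dependent places dividing the gcd into a usable gain in the Subspace Theorem inequality, and the final optimization --- is a description of the shape of the argument rather than an argument; you say yourself that this balancing is the delicate heart, and it is precisely the content of the lemma. Likewise, the effectivity of the translates is asserted via a plausible but unexecuted one-variable analysis, and the one-variable lower bound of \cite{CZ} that you lean on is itself Subspace-based, so its effectivity in the form you need is not automatic. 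If you are permitted to quote the published gcd theorem together with its effectively computable exceptional translates, then your text is a correct (if underspecified) reduction to it --- but at that point one may as well quote Proposition~4 of \cite{CZ} directly, which is exactly what the paper does.
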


Since $h(u)=h(\sigma_r(u))$ and $u,\sigma_r(u)\in \O_{L,S_L}^*$, taking $g=f^{\sigma_r}$ it follows immediately from
Lemma~\ref{vojta} that all but finitely many elements of the set $V_r=\{(u,\sigma_r(u))\mid u\in U_r\}$ are contained in finitely many effectively computable translates of one-dimensional subgroups of $\G_m^2$.  Let $X$ be a translate of a one-dimensional subgroup of $\G_m^2$ that contains infinitely many elements of $V_r$.  Let $(v,\sigma_r(v))\in X\cap V_r$.  Taking $u=v'/v\in \O_{k,S}^*$, where $(v',\sigma_r(v'))\in X\cap V_r$, we see that infinitely many elements of the form $(u,\sigma_r(u))$, $u\in \O_{k,S}^*$, will lie in the associated one-dimensional subgroup in $\G_m^2$.  We now classify the possibilities for such a one-dimensional subgroup.

Suppose there exists $a,b\in \mathbb{Z}$, not both zero, such that 
\begin{equation}
\label{depeq}
u^a\sigma_r(u)^b=1,
\end{equation}
for infinitely many $u\in \O_{k,S}^*$.  We claim that $a=\pm b$.  Let $l$ be the order of $\sigma_r$.  Then
\begin{equation*}
u^{b^l}=\sigma_r^l(u)^{b^l}=\sigma_r^{l-1}(u)^{-ab^{l-1}}=\cdots=u^{(-a)^l}.
\end{equation*}
So $u^{b^l-(-a)^l}=1$ for infinitely many $u\in \O_{k,S}^*$.  This implies that $b^l=(-a)^l$, or $a=\pm b$, as claimed.

Suppose first that $a=-b$.  Then for any $u\in \O_{k,S}^*$ satisfying \eqref{depeq} we have $\sigma_r(u^a)=u^a$.  So $u^a\in \O_{k,S}^{*\sigma_r}=F\cap \O_{k,S}^*$, where $F$ is the fixed field of $\sigma_r$.  It follows that $\O_{k,S}^{*\sigma_r}$ has finite index in $\{u\in \O_{k,S}^* \mid u^a\sigma_r(u)^{-a}=1\}$ and that $\{u\in \O_{k,S}^*\mid (u,\sigma_r(u))\in X\cap V_r\}$ is contained in a finite number of cosets of $\O_{k,S}^{*\sigma_r}$ in $\O_{k,S}^*$.

Suppose now that $a=b$.  Then for any $u\in \O_{k,S}^*$ satisfying \eqref{depeq} we have $\sigma_r(u^a)=u^{-a}$.  By definition, we have $u^{-a}\in \O_{k,S}^{*\sigma_r\tau}$.  Then, as above, we find that $\{u\in \O_{k,S}^*\mid (u,\sigma_r(u))\in X\cap V_r\}$ is contained in a finite number of cosets of $\O_{k,S}^{*\sigma_r\tau}$ in $\O_{k,S}^*$.

Since there are only finitely many such $X$ and finitely many $r$, we conclude that there exists a set $T$ as in the statement of the theorem such that $U\setminus T$ is finite.

We now prove that all of the elements in $T$ satisfy \eqref{rat} for some choice of $S'$, completing the proof of the theorem.  Let $f_1,\ldots, f_N\in L[x]$ be the monic irreducible factors of $f(x)$ over $L$.  First, consider cosets in $\O_{k,S}^*$ of the form $u_i\O_{k,S}^{*\sigma_r}$.  From a slight modification of the first part of the proof above, we need only consider cosets $u_i\O_{k,S}^{*\sigma_r}$ such that for some $j\in \{1,\ldots, N\}$ and $\epsilon>0$, there are infinitely many $u\in \O_{k,S}^{*\sigma_r}$ such that
\begin{equation}
\label{heqn}
\Sigma_{v\in M_L} \log^-\max\{|f_j(u_iu)|_v,|f_j^{\sigma_r}(\sigma_r(u_iu))|_v\} \leq -\epsilon h(u_iu).
\end{equation}
Note that $\sigma_r(u_iu)=\sigma_r(u_i)u$, since $u\in \O_{k,S}^{*\sigma_r}$.  If $f_j(u_ix)$ and $f_j^{\sigma_r}(\sigma_r(u_i)x)$ are relatively prime in $L[x]$, then the left-hand side of \eqref{heqn} is bounded from below, independent of $u\in \O_{k,S}^{*\sigma_r}$.  Since there are only finitely many $u\in \O_{k,S}^{*\sigma_r}$ with $h(u_iu)$ bounded, this contradicts the inequality \eqref{heqn} for all but finitely many $u\in \O_{k,S}^{*\sigma_r}$.  So $f_j(u_ix)$ and $f_j^{\sigma_r}(\sigma_r(u_i)x)$ have a nontrivial common factor.  Since $f_j(u_ix)$ and $f_j^{\sigma_r}(\sigma_r(u_i)x)$ are both irreducible over $L$, they must then be equal up to multiplication by a constant factor.  Thus, 
\begin{equation*}
\frac{f_j(u_ix)}{u_i^d}=\frac{f_j^{\sigma_r}(\sigma_r(u_i)x)}{\sigma_r(u_i)^d}, 
\end{equation*}
where $d=\deg f_j$.  It follows that for all $u$ in $\O_{k,S}^{*\sigma_r}$, 
\begin{equation*}
\frac{f_j(u_iu)}{u_i^d}=\sigma_r\left(\frac{f_j(u_iu)}{u_i^d}\right).
\end{equation*}
So $\frac{f_j(u_iu)}{u_i^d}\in k'$, the fixed field of $\sigma_r$.  Then for all $u\in \O_{k,S}^{*\sigma_r}$, $\frac{f_j(u_iu)}{u_i^d}$ lies in a proper subfield of $L$ not containing $k$.  So in this case \eqref{rat} holds with $S'=S_L$.

Now consider a coset of the form $u_i\O_{k,S}^{*\sigma_r\tau}$.  Again, we may assume that for some $j$ and some $\epsilon>0$, \eqref{heqn} is satisfied for infinitely many $u\in \O_{k,S}^{*\sigma_r\tau}$.  By definition, for $u\in \O_{k,S}^{*\sigma_r\tau}$ we have $\sigma_r(u)=u^{-1}$.  Let $d=\deg f_j$.  Similar to before, if $f_j(u_ix)$ and $x^df_j^{\sigma_r}(\sigma_r(u_i)/x)$ are relatively prime in $L[x]$, then it follows that
\begin{equation*}
\Sigma_{v\in M_L}\log^-\max\{|f_j(u_iu)|_v,|f_j^{\sigma_r}(\sigma_r(u_i)/u)|_v\}
\end{equation*}
is bounded from below, independent of $u\in \O_{k,S}^{*\sigma_r\tau}$.  This again gives a contradiction with \eqref{heqn} and so $f_j(u_ix)$ and $x^df_j^{\sigma_r}(\sigma_r(u_i)/x)$ must have a nontrivial common factor over $L$.  Since $f_j$ is irreducible over $L$, the two polynomials must be equal up to multiplication by a constant.  Evaluating at any $x=u'\in \O_{k,S}^{*\sigma_r\tau}$ with $f_j(u_iu')\neq 0$, we find that we must have that
\begin{equation*}
\frac{f_j(u_ix)}{f_j(u_iu')}=\frac{x^df_j^{\sigma_r}(\sigma_r(u_i)/x)}{u'^d\sigma_r(f_j(u_iu'))}.
\end{equation*}
Since $(\O_{k,S}^{*\sigma_r\tau})^2$ has finite index in $\O_{k,S}^{*\sigma_r\tau}$, we can find finitely many elements $u_1',\ldots,u_{l'}'\in \O_{k,S}^{*\sigma_r\tau}$ with $f_j(u_iu_l')\neq 0$, $l=1,\ldots, l'$, and such that for any $u\in \O_{k,S}^{*\sigma_r\tau}$, there exists some $l\in \{1,\ldots,l'\}$ with $\frac{u}{u_l'}\in (\O_{k,S}^{*\sigma_r\tau})^2$.  Let $u\in \O_{k,S}^{*\sigma_r\tau}$ and $u_l'$ chosen as above.  Then we have the identity
\begin{equation*}
\sigma_r\left(\left(\frac{u_l'}{u}\right)^{d/2}\frac{f_j(u_iu)}{f_j(u_iu_l')}\right)=\left(\frac{u_l'}{u}\right)^{d/2}\frac{f_j(u_iu)}{f_j(u_iu_l')}
\end{equation*}
and it follows that $\left(\frac{u_l'}{u}\right)^{d/2}\frac{f_j(u_iu)}{f_j(u_iu_l')}\in k'$, the fixed field of $\sigma_r$.  We can enlarge $S_L$ to a finite set of places $S'$ of $L$ such that $f_j(u_iu_l')$ is an $S'$-unit for all choices of $i$, $j$, and $l$.  Then \eqref{rat} holds for all $u\in u_i\O_{k,S}^{*\sigma_r\tau}$.
\qed

\vspace{.1in}

In the case of a cyclic subgroup of $k^*$ the theorem takes a particularly simple form.

\begin{cor}
\label{mcor}
Let $a\in k^*$.  Let $S$ be a finite set of places of $k$ such that $a$ is an $S$-unit.  Assume that for all positive integers $m$: 
\begin{enumerate}
\item  The element $a^m$ does not lie in a proper subfield of $k$.
\item  $k$ is not a quadratic extension of a field $k'$ with $N^k_{k'}(a^m)=1$.
\end{enumerate}
Let $\epsilon>0$.  Then for all but finitely many integers $n$,
\begin{equation*}
N(J(f(a^n)))<H(a^n)^{\epsilon}.
\end{equation*}
\end{cor}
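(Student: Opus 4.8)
The plan is to apply Theorem~\ref{main} to the cyclic subgroup $\{a^n : n\in\Z\}\subset\O_{k,S}^*$ and to use the two hypotheses of the corollary to force all but finitely many powers $a^n$ out of the exceptional set $T$. First I would note that $a$ is not a root of unity: if $a^m=1$ for some $m\geq 1$ then $a^m\in\Q$, a proper subfield of $k$ since $d=[k:\Q]\neq 1$, contradicting hypothesis (a); in particular $n\mapsto a^n$ is injective (and $f(a^n)\neq 0$ for all but finitely many $n$, so $J(f(a^n))$ makes sense). By Theorem~\ref{main} there is a finite set $E\subset\O_{k,S}^*$ with
\[
\{u\in\O_{k,S}^*: N(J(f(u)))\geq H(u)^\epsilon\}\ \subseteq\ T\cup E,
\]
where $T$ is a finite union of cosets of the form $u_i\O_{k,S}^{*\sigma_i}$ and $u_i\O_{k,S}^{*\sigma_i\tau}$ with $\sigma_i\in\Gal(L/\Q)\setminus\Gal(L/k)$. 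Only finitely many $n$ have $a^n\in E$, so it remains to show that each such coset contains only finitely many powers of $a$.

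I would do this by contradiction. Suppose infinitely many powers $a^n$ lie in one coset $u_iH$, with $\sigma:=\sigma_i$ and $H=\O_{k,S}^{*\sigma}$ or $H=\O_{k,S}^{*\sigma\tau}$. Dividing one such power by another and using that $H$ is a subgroup of $\O_{k,S}^*$, we get $a^m\in H$ for some positive integer $m$; moreover $\Q(a^m)=k$ by hypothesis (a). If $H=\O_{k,S}^{*\sigma}$ then $\sigma(a^m)=a^m$, so $\sigma$ fixes $k=\Q(a^m)$ pointwise, i.e.\ $\sigma\in\Gal(L/k)$, contradicting the choice of $\sigma_i$. If $H=\O_{k,S}^{*\sigma\tau}$ then by the definition of $\O_{k,S}^{*\sigma\tau}$ we have $\sigma(a^m)=a^{-m}$; since $\Q(a^{-m})=\Q(a^m)=k$, the restriction $\rho:=\sigma|_k$ is an automorphism of $k$ over $\Q$. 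It is an involution: $\rho(a^{-m})=a^m$ gives $\rho^2=\id$ (as $a^m$ generates $k$), and $\rho\neq\id$ since $a^m\neq a^{-m}$ ($a$ not being a root of unity). Hence the fixed field $k'=k^{\rho}$ satisfies $[k:k']=2$ and $N^k_{k'}(a^m)=a^m\,\rho(a^m)=a^m a^{-m}=1$, contradicting hypothesis (b). Thus each coset of $T$ meets $\{a^n\}$ in only finitely many elements, so only finitely many $n$ have $a^n\in T$; combined with the finiteness of $\{n:a^n\in E\}$, this gives $N(J(f(a^n)))<H(a^n)^\epsilon$ for all but finitely many $n$.

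I do not expect a real obstacle here: the argument is a translation between the group-theoretic description of $T$ in Theorem~\ref{main} and the field-theoretic hypotheses of the corollary, with $a^m\in\O_{k,S}^{*\sigma}$ corresponding to ``some power of $a$ lies in a proper subfield of $k$'' and $a^m\in\O_{k,S}^{*\sigma\tau}$ corresponding to ``$k$ is a quadratic extension of a field over which some power of $a$ has relative norm $1$.'' The one spot needing a little care is checking that, in the $\sigma\tau$-case, $\sigma$ really restricts to an automorphism of $k$, so that $\rho$ and its fixed field are defined; this again uses hypothesis (a).
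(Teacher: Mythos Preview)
Your argument is correct and matches the paper's own proof essentially step for step: apply Theorem~\ref{main}, pass from infinitely many $a^n$ in a single coset to some nonzero power $a^m$ in the corresponding subgroup, and then use hypothesis (a) to rule out $\O_{k,S}^{*\sigma}$ and hypotheses (a)+(b) to rule out $\O_{k,S}^{*\sigma\tau}$ by showing $\sigma$ restricts to an order-two automorphism of $k=\Q(a^m)$ with $N^k_{k'}(a^m)=1$. Your write-up is in fact slightly more careful than the paper's, explicitly noting that $a$ is not a root of unity and that $\rho\neq\id$.
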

\begin{proof}
Suppose that for infinitely many $n$, $N(J(f(a^n)))\geq H(a^n)^{\epsilon}$.  Then by Theorem \ref{main}, there exists $\sigma\in \Gal(L/\Q)\setminus \Gal(k/ \Q)$ and $u\in \O_{k,S}^*$ such that for infinitely many $n$, $a^n$ lies in a coset of the form $u\O_{k,S}^{*\sigma}$ or $u\O_{k,S}^{*\sigma\tau}$.  This implies that for some $m\neq 0$, $a^m\in \O_{k,S}^{*\sigma}$ or $a^m\in \O_{k,S}^{*\sigma\tau}$.  In the first case, $a^m$ lies in the proper subfield $k\cap F$ of $k$, where $F$ is the fixed field of $\sigma$.  Suppose that $a^m\in \O_{k,S}^{*\sigma\tau}$ and that $a^m$ does not lie in a proper subfield of $k$.  Then $k=\mathbb{Q}(a^m)$.  Since $\sigma(a^m)=a^{-m}$, $\sigma$ restricts to an automorphism of $k$ over $\mathbb{Q}$.  Note that $\sigma^2(a^m)=a^m$, so $\sigma$ is an automorphism of $k$ of order $2$.  Let $k'$ be the fixed field of $\sigma$.  Then $[k:k']=2$, $\Gal(k/k')=\{\id, \sigma\}$, and $N^k_{k'}(a^m)=a^m\sigma(a^m)=1$.
\end{proof}

We give an example related to Fibonacci numbers to show the likely necessity of the less obvious condition (b) in Corollary \ref{mcor}.

\begin{exm}
Let $k=\mathbb{Q}(\sqrt{5})$ and $a=\varphi=\frac{1+\sqrt{5}}{2}\in k^*$.  Let $S$ consist of the archimedean places of $k$ and the prime lying above $5$.  Let $f(x)=x+1$.  For $n$ odd, we have
\begin{equation*}
\frac{\varphi^{2n}+1}{\varphi^n\sqrt{5}}=F_n,
\end{equation*}
where $F_n$ is the $n$th Fibonacci number.  So
\begin{equation*}
f(\varphi^{2n})\O_{k,S}=F_n\O_{k,S}.
\end{equation*}
A well-known na\"ive heuristic argument suggests that there should be infinitely many Fibonacci numbers that are prime and congruent to $\pm 2\pmod{5}$ (so that these primes are inert in $k$).  In this case, there would be an $\epsilon>0$ and infinitely many values of $n$ such that $N(J(f(\varphi^n)))=N(f(\varphi^n))>H(\varphi^n)^{\epsilon}$.  This doesn't contradict Corollary \ref{mcor} as $N^k_\mathbb{Q}(\varphi^2)=1$.
\end{exm}

We now give a slight reformulation of our results.

\begin{dfn}
Let $D$ be an effective divisor on $\mathbb{P}^1$ defined over $k$ and
supported on $\mathbb{P}^1\setminus \{0,\infty\}=\G_m$.  Let $a\in
k^*, a\not\in \Supp D$, where $\Supp D$ is the support of $D$.  Let
$h_D$ be the absolute logarithmic height associated to $D$ and let
$h_D=\sum_{v\in M_k}h_{D,v}$ be a decomposition of $h_D$ into
local heights (Weil functions).  For a place $v\in M_k$ associated to a prime $\mathfrak{p}$ lying above a prime $p\in \mathbb{Z}$, let $f_v=f_\mathfrak{p}=[\O_k/\mathfrak{p}:\mathbb{Z}/p\mathbb{Z}]$.  Set $f_v=1$ if $v|\infty$ .  We define the degree one height of $a$
with respect to $k$ and $D$ by
\begin{equation*}
h_{D,\deg_1(k)}(a)=\sum_{\substack{v\in M_k\\f_v=1}}h_{D,v}(a).
\end{equation*}
Similarly, we define
\begin{equation*}
h_{D,\deg_{>1}(k)}(a)=\sum_{\substack{v\in M_k\\f_v>1}}h_{D,v}(a).
\end{equation*}
\end{dfn}

Note that 
\begin{equation*}
h_D(a)=h_{D,\deg_1(k)}(a)+h_{D,\deg_{>1}(k)}(a)
\end{equation*}
and by standard properties of heights, $h_{D,\deg_1(k)}$ and
$h_{D,\deg_{>1}(k)}$ depend on the choice of $h_D$ and the local height
functions only up to $O(1)$.  


\begin{cor}\label{heightmain}
Let $D$ be an effective divisor on $\mathbb{P}^1$ defined over $k$ and supported on $\mathbb{P}^1\setminus \{0,\infty\}$.  Let $f(x)\in \O_k[x]$ be a polynomial defining $D$ with monic irreducible factors $f_1,\ldots,f_n$ over $L$.  Let $\epsilon>0$.    Then there exists a finite set of places $S'$ of $L$ such that for every $u\in \O_{k,S}^*$ either
\begin{enumerate}
\item
\label{maineqn2}
\begin{equation*}
h_{D,\deg_{>1}(k)}(u)<\epsilon h_D(u)
\end{equation*}
or\\
\item
for some $i$,
\begin{equation*}
f_i(u)\O_{L,S'}=\alpha \O_{L,S'}
\end{equation*}
for some $\alpha$ that lies in a proper subfield of $L$ not containing $k$.
\end{enumerate}
\end{cor}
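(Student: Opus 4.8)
The plan is to deduce Corollary~\ref{heightmain} from Theorem~\ref{main} by matching the arithmetic quantity $N(J(f(u)))$ with the local heights of $D$ at the primes of degree greater than one. Since $D=\operatorname{div}_0 f$, the functions $a\mapsto -\log^-|f(a)|_v$ form an admissible system of Weil functions for $D$, and I will compute with this choice (any other choice changes every quantity below by $O(1)$, which is harmless; we may also ignore the finitely many $u$ with $f(u)=0$, for which $h_D$ is undefined). With this choice $h_D(u)=-\sum_{v\in M_k}\log^-|f(u)|_v=h(f(u))=(\deg f)\,h(u)+O(1)$ by functoriality of heights, so by Northcott only finitely many $u\in\O_{k,S}^*$ have $h_D(u)$ bounded. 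For a finite place $v\notin S$ lying over $\mathfrak p$ we have $f(u)\in\O_{k,S}$, hence $-\log^-|f(u)|_v=-\log|f(u)|_v=\tfrac1d\ord_{\mathfrak p}(f(u))\log N(\mathfrak p)$, and summing over the primes $\mathfrak p\notin S$ of degree $>1$ dividing $f(u)$ gives $\sum_{v\notin S,\;f_v>1}(-\log^-|f(u)|_v)=\tfrac1d\log N(J(f(u)))$. Finally, exactly as in the remark preceding the proof of Theorem~\ref{main}, $\sum_{v\in S}(-\log^-|f(u)|_v)=\tfrac1d\log C_u$, which by Lemma~\ref{Roth} ($C_u\ll H(u)^{\epsilon}$) is at most $\epsilon_1 h(u)+O_{\epsilon_1}(1)$ for any prescribed $\epsilon_1>0$. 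Bounding the $S$-part of $h_{D,\deg_{>1}(k)}(u)$ by $\sum_{v\in S}(-\log^-|f(u)|_v)$ then yields
\[
h_{D,\deg_{>1}(k)}(u)\ \le\ \tfrac1d\log N(J(f(u)))+\epsilon_1 h(u)+O_{\epsilon_1}(1).
\]

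Now apply Theorem~\ref{main} with a parameter $\epsilon_0$: there is a finite set $S_0'$ of places of $L$ such that every $u\in\O_{k,S}^*$ satisfies either $N(J(f(u)))<H(u)^{\epsilon_0}$ or \eqref{rat} with $S'=S_0'$, the latter being precisely condition~(2) of the corollary. Suppose $u$ lies in the first case. Then $\tfrac1d\log N(J(f(u)))<\tfrac{\epsilon_0}{d}h(u)$, and substituting $h(u)=h_D(u)/\deg f+O(1)$ into the displayed inequality gives
\[
h_{D,\deg_{>1}(k)}(u)\ <\ \frac{\epsilon_0/d+\epsilon_1}{\deg f}\,h_D(u)+O_{\epsilon_1}(1).
\]
Choosing $\epsilon_1=\epsilon\deg f/4$ and $\epsilon_0=\epsilon d\deg f/4$ makes the coefficient equal to $\epsilon/2$, so $h_{D,\deg_{>1}(k)}(u)<\epsilon h_D(u)$ as soon as $h_D(u)$ exceeds a constant depending only on $\epsilon$ (and $f,k,S$). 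Since only finitely many $u$ fail that bound, condition~(1) holds for all but finitely many $u$.

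It remains to absorb the finitely many exceptional $u$ — those lying in the first Theorem~\ref{main} case but with $h_D(u)$ small — into condition~(2) by enlarging $S_0'$. Each such $u$ has $f(u)\neq 0$, so some irreducible factor $f_i$ satisfies $f_i(u)\neq 0$; let $S'$ be $S_0'$ together with, for each such $u$, all the finitely many primes of $L$ dividing $f_i(u)$. Then $f_i(u)\O_{L,S'}=\O_{L,S'}=1\cdot\O_{L,S'}$, and $\alpha=1$ lies in $\Q$, which — because $[k:\Q]=d\neq 1$ — is a proper subfield of $L$ not containing $k$; thus these $u$ satisfy condition~(2) for this $S'$. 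Enlarging $S_0'$ to $S'$ preserves condition~(2) for the $u$ already handled by Theorem~\ref{main}, so with this $S'$ every $u\in\O_{k,S}^*$ satisfies (1) or (2), as required.

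The step I expect to demand the most care is the first one: checking cleanly that $a\mapsto-\log^-|f(a)|_v$ is an admissible Weil function for $D$, keeping the normalizations of $|\cdot|_v$ straight so that $-\log|f(u)|_{\mathfrak p}$ is exactly $\tfrac1d\ord_{\mathfrak p}(f(u))\log N(\mathfrak p)$, and — crucially — controlling the places of $S$, where Lemma~\ref{Roth} is what makes the $S$-contribution to $h_{D,\deg_{>1}(k)}(u)$ only $O_{\epsilon_1}(\epsilon_1 h(u))$ rather than a constant multiple of $h(u)$. Everything else is routine constant-bookkeeping, together with the elementary observation that $\Q$ is an admissible target subfield in condition~(2), which lets that condition swallow the finitely many low-height exceptions.
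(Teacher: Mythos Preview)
Your proof is correct and follows essentially the same approach as the paper: choose $h_{D,v}(a)=\max\{-\log|f(a)|_v,0\}$, identify the non-$S$ degree-$>1$ contribution with $\tfrac{1}{d}\log N(J(f(u)))$, control the $S$-contribution via Lemma~\ref{Roth}, and invoke Theorem~\ref{main}. You are in fact more careful than the paper on two points it leaves implicit --- converting the bound $\epsilon h(u)+O(1)$ into the strict inequality $\epsilon h_D(u)$ by choosing auxiliary $\epsilon_0,\epsilon_1$, and then absorbing the finitely many low-height exceptions into condition~(b) by enlarging $S'$ so that $f_i(u)\O_{L,S'}=\O_{L,S'}$ with $\alpha=1\in\Q$.
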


All but finitely many elements not satisfying \eqref{maineqn2} are again contained in a set $T$ as in Theorem \ref{main}.  There is also a similar reformulation of Corollary \ref{mcor} in terms of $h_{D,\deg_{>1}(k)}(u)$.

We will need a lemma.

\begin{lem}
\label{Roth}
Let $D$ be as in Corollary \ref{heightmain}.  For any finite set of places $S'\subset M_k$ and any $\epsilon>0$,
\begin{equation}
\label{ineq}
\sum_{v\in S'}h_{D,v}(u)<\epsilon h(u)+O(1)
\end{equation}
for all $u\in \O_{k,S}^*$.
\end{lem}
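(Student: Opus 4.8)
The plan is to reduce to the classical statement that $S'$-integral points on $\mathbb{P}^1 \setminus \{0,\infty,\text{one more point}\}$ have bounded height, in the sharpened form given by Roth's theorem; more precisely, to use the fact that for a fixed point $\alpha \in \overline{k}^*$, the local contribution $\sum_{v \in S'} \lambda_{\alpha,v}(u)$ (the sum of Weil functions at $\alpha$ over places in $S'$) is $o(h(u)) + O(1)$ as $u$ ranges over $S$-units, since $u$ is close to $\alpha$ at the places of $S'$ only at the cost of being an excellent approximation to the algebraic number $\alpha$. First I would write $D = \sum_j m_j (\alpha_j)$ with each $\alpha_j \in \overline{k}^*$, so that, choosing local heights compatibly, $h_{D,v}(u) = \sum_j m_j \lambda_{\alpha_j, v}(u) + O(1)$ for $v \in S'$, and it suffices to prove $\sum_{v \in S'} \lambda_{\alpha, v}(u) < \epsilon h(u) + O(1)$ for each fixed $\alpha \in \overline{k}^*$ and each $\epsilon > 0$.

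Next I would handle the archimedean and non-archimedean places in $S'$ uniformly by invoking the $S'$-unit / Roth-type estimate: for $u \in \O_{k,S}^*$ and $\alpha$ algebraic, one has $\sum_{v \in S'} \log^+ \frac{1}{|u - \alpha|_v} \leq (2+\epsilon)\, h(u) \cdot \frac{?}{}$ — rather, the cleaner route is to note that the quantity $\sum_{v\in S'}\lambda_{\alpha,v}(u)$ is exactly $h_{D_0}(u) - \sum_{v\notin S'}\lambda_{\alpha,v}(u) + O(1)$ where $D_0 = (\alpha)$, and that outside $S'$ the $S$-unit $u$ has $|u-\alpha|_v$ controlled by $|u|_v$ and $|\alpha|_v$; the genuinely essential input is Roth's theorem applied to the approximation of $\alpha$ by the $S$-units $u$. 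Since $u \in \O_{k,S}^*$, for any place $w$ where $u$ is very close to $\alpha \neq 0$ we get a strong rational-in-the-$S$-unit-sense approximation, and Roth's theorem bounds how often and how well this can happen, yielding $\sum_{v\in S'}\lambda_{\alpha,v}(u) \leq \epsilon h(u) + O(1)$. I expect the cleanest formal citation to be to the Roth/Ridout theorem or directly to the subspace-theorem consequence that $S'$-integral points on $\mathbb{G}_m \setminus \{\alpha\}$ have height bounded by $\epsilon$ times the height of the ambient $S$-unit coordinate.

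The main obstacle I anticipate is a purely bookkeeping one: making sure the estimate is uniform over all $u \in \O_{k,S}^*$ — including those with small height, where it is trivial — and correctly absorbing into the $O(1)$ the comparison between the chosen decomposition $h_D = \sum_v h_{D,v}$ and the standard Weil functions $\lambda_{\alpha_j,v}$, as well as the finitely many $\alpha_j$ that might lie in proper extensions of $k$ (which forces one to pass to a finite extension and use $h$ is invariant under such base change up to $O(1)$ and that $S'$ pulls back to a finite set). There is also the minor point that $D$ is supported away from $0$ and $\infty$, which is exactly what guarantees none of the $\alpha_j$ is $0$, so that $|u-\alpha_j|_v$ is not forced to be small merely because $|u|_v$ is small — this is where the hypothesis on $\Supp D$ is used. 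Once these are in place, the inequality \eqref{ineq} follows immediately from Roth's theorem.
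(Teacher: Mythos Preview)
You correctly reduce to the case $D=(\alpha)$ for a single $\alpha\in\overline{k}^*$ and correctly identify Roth's theorem as the tool, but the argument has a gap at the decisive step. Roth's theorem applied to the single target $\alpha$ over the places in $S'$ yields only
\[
\sum_{v\in S'}\lambda_{\alpha,v}(u)\leq (2+\epsilon)\,h(u)+O(1),
\]
and your write-up visibly stalls there (the ``$\tfrac{?}{}$'' after $(2+\epsilon)h(u)$), then simply asserts the desired bound $\epsilon h(u)$ and hopes a citation to Ridout or to a subspace-theorem corollary will close the gap. What is missing is the mechanism by which the $S$-unit hypothesis converts $2+\epsilon$ into $\epsilon$; without it the argument is circular, since the ``fact'' you plan to use in the first paragraph is exactly the lemma to be proved.

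The paper supplies this mechanism in two lines. After reducing to $D=(\alpha)$, enlarge $S'$ so that $S'\supset S$ and set $E=(0)+(\infty)$. Because $u$ is an $S'$-unit, the entire height of $u$ relative to $E$ is supported on $S'$:
\[
\sum_{v\in S'} h_{E,v}(u)=2h(u)+O(1).
\]
Now apply Roth to the \emph{three} targets $\{0,\alpha,\infty\}$ simultaneously over $S'$:
\[
\sum_{v\in S'} h_{D+E,v}(u)<(2+\epsilon)\,h(u)+O(1),
\]
and subtract the previous identity to obtain $\sum_{v\in S'}h_{D,v}(u)<\epsilon h(u)+O(1)$. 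This is indeed the Ridout phenomenon you gesture at, but over a number field it is not a separate black box: it is precisely this device of trading the $S$-unit condition for full proximity to $0$ and $\infty$ and then absorbing that into a three-point Roth inequality. Your observation that $\Supp D$ avoids $0$ and $\infty$ is used exactly here, to ensure $D+E$ has at least three distinct points.
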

\begin{proof}
It suffices to show this for $D$ a
point (not equal to $0$ or $\infty$) and $S'\supset S$.  Let
$E=0+\infty$.  Since $u$ is an $S'$-unit, we have 
\begin{equation*}
\sum_{v\in S'}h_{E,v}(u)=2h(u)+O(1).  
\end{equation*}
By Roth's theorem, 
\begin{equation*}
\sum_{v\in S'}h_{D+E,v}(u)=\sum_{v\in S'}h_{D,v}(u)+2h(u)+O(1)
<(2+\epsilon)h(u)+O(1),
\end{equation*}
which gives \eqref{ineq}.
\end{proof}

In particular, it follows from Lemma \ref{Roth} that Corollary \ref{heightmain}
remains true if we add finitely many local heights to
$h_{D,\deg_{>1}(k)}$ (e.g., all the archimedean ones).\\

\noindent
{\it Proof of Corollary \ref{heightmain}:} \/ We may take as local height functions associated to $D$ the
functions
\begin{equation*}
h_{D,v}(a)=\max\left\{-\log|f(a)|_{v},0\right\}, \quad v\in M_k.
\end{equation*}

Then for all $u\in \O_{k,S}^*$,
\begin{align*}
h_{D,\deg_{>1}(k)}(u)&=\sum_{\substack{v\in M_k\\f_v>1}}h_{D,v}(u)\\
&=\sum_{\substack{v\in M_k\setminus S\\f_v>1}}\max\left\{-\log|f(u)|_{v},0\right\}+\sum_{\substack{v\in S\\f_v>1}}h_{D,v}(u)\\
&=\frac{1}{[k:\mathbb{Q}]}\log N(J(f(u)))+\sum_{\substack{v\in S\\f_v>1}}h_{D,v}(u)\\
&< \epsilon h(u)+O(1)
\end{align*}
by Theorem \ref{main} and Lemma \ref{Roth}.
\qed

\section{Elliptic Curves}

Theorem~\ref{main} has a conjectural analogue for elliptic curves,
following from a conjectural analogue of Lemma~\ref{vojta}.

\begin{conj}[Vojta]\label{vojtaconj}
Let $E$ be an elliptic curve defined over a number field $k$.  Let $h$ be an ample height
function on $E$.  Let $B\subset E(\kbar)\times E(\kbar)$ be a finite set of points with $B$ defined over $k$.  Let $\pi\colon X\to E\times E$ be the morphism obtained by blowing up the points in $B$ and let $Y$ be the
exceptional divisor of $\pi$.  Let $h_Y$ be a logarithmic height
function with respect to $Y$.  Let
$\epsilon>0$.  There exists a proper Zariski closed
subset $Z(\epsilon)$ of $X$ such that for every $(P,Q)\in (E\times
E)(k)-\pi(Z(\epsilon))$, we have
\[h_Y(\pi^{-1}(P,Q))\leq \epsilon (h(P)+h(Q)) + O(1).\]
\end{conj}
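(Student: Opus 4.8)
\emph{Proof proposal.} The plan is to deduce Conjecture~\ref{vojtaconj} from Vojta's general conjecture for smooth projective varieties, in the form: for a smooth projective variety $X$ over a number field $k$, a normal crossings divisor $D$ on $X$, an ample divisor $A$ on $X$, and $\epsilon>0$, there is a proper Zariski closed subset $Z\subsetneq X$ such that $h_{K_X+D}(P)+m_S(D,P)\le \epsilon\, h_A(P)+O(1)$ for all $P\in X(k)\setminus Z$. I will only need the case $D=0$, so the proximity term disappears and this reads $h_{K_X}(P)\le \epsilon\, h_A(P)+O(1)$ off a proper closed subset. The whole point is then a canonical bundle computation together with routine height functoriality.

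First I would record the geometric input. Since $E\times E$ is an abelian surface, $K_{E\times E}$ is trivial, so if $\pi\colon X\to E\times E$ is the blow-up of the finitely many points of $B$ then $K_X=\pi^*K_{E\times E}+\sum_i E_i=\sum_i E_i=Y$, where the $E_i$ are the exceptional $(-1)$-curves and $Y=\sum_i E_i$ is the reduced exceptional divisor; in particular $K_X$ and $Y$ are linearly equivalent over $k$, so $h_Y=h_{K_X}+O(1)$. (If $B=\emptyset$ the statement is vacuous, so assume $B\neq\emptyset$.) Next, fix an ample divisor $H$ on $E$; since any two ample heights on $E$ differ by at most a positive multiplicative constant and $O(1)$, it suffices to prove the bound with $h(P)+h(Q)$ replaced by $h_H(P)+h_H(Q)$. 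Put $A'=p_1^*H+p_2^*H$ on $E\times E$, so $h_{A'}(P,Q)=h_H(P)+h_H(Q)+O(1)$, and choose a small rational $\delta>0$ so that $A:=\pi^*A'-\delta Y$ is ample on $X$ (the standard fact that an ample pullback minus a small multiple of the exceptional divisor stays ample).

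Now apply Vojta's conjecture to $(X,0,A)$ with a suitably small parameter $\epsilon_0>0$: there is a proper Zariski closed $Z'\subsetneq X$ with, for all $R\in X(k)\setminus Z'$,
\[
h_Y(R)=h_{K_X}(R)+O(1)\le \epsilon_0\, h_A(R)+O(1)=\epsilon_0\bigl(h_{\pi^*A'}(R)-\delta\, h_Y(R)\bigr)+O(1).
\]
Since $Y$ is effective, $h_Y(R)$ is bounded below by $O(1)$ off $\Supp Y$, so rearranging gives $h_Y(R)\le \epsilon_0\, h_{\pi^*A'}(R)+O(1)=\epsilon_0\, h_{A'}(\pi(R))+O(1)$. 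Set $Z(\epsilon)=Z'\cup Y$, a proper Zariski closed subset of $X$ (each piece has dimension at most one); since $\pi$ is proper, $\pi(Z(\epsilon))$ is closed, of dimension at most one, hence a proper closed subset of $E\times E$, and it contains $\pi(Y)=B$. For $(P,Q)\in(E\times E)(k)\setminus\pi(Z(\epsilon))$ the fibre $R=\pi^{-1}(P,Q)$ is a single $k$-point outside $Z'$, and the inequality reads $h_Y(\pi^{-1}(P,Q))\le \epsilon_0\bigl(h_H(P)+h_H(Q)\bigr)+O(1)$; choosing $\epsilon_0$ small enough relative to the comparison constant between $h_H$ and $h$ yields the assertion.

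The main obstacle is that Vojta's general conjecture is itself open, so what I have described is a reduction rather than an unconditional proof — which is exactly why the statement is recorded here as a conjecture rather than a theorem. In the complex-multiplication setting treated later one would instead invoke the second author's theorem, which supplies precisely the instance of Vojta's conjecture needed for $X$ when $E$ has complex multiplication and the relevant points are rational over the compositum of $k$ with $\End(E)\otimes\Q$; there the remaining work is checking that the data $(X,Y,B)$ above fit the hypotheses of that theorem, the normal crossings requirement on $Y$ being automatic since $Y$ is a disjoint union of smooth rational curves.
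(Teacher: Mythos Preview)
The statement is recorded in the paper as a \emph{conjecture}, not a theorem; the paper offers no proof and simply remarks that it ``is a special case of a much more general set of conjectures made by Vojta --- see \cite{Vo} for details.'' Your proposal is therefore not to be compared against a proof in the paper, because there is none. What you have written is a correct and carefully executed expansion of that one-line remark: the canonical bundle computation $K_X=Y$ on the blow-up of an abelian surface, the choice of an ample class $A=\pi^*A'-\delta Y$, the application of Vojta's general conjecture with $D=0$, and the rearrangement all go through, and you rightly flag at the end that this is a reduction rather than an unconditional argument.

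One small correction to your closing paragraph: the second author's theorem (Theorem~\ref{Davidthm} in the paper) does not supply Vojta's conjecture for all $k$-rational points on $X$ in the CM case; it only gives the inequality for pairs $(P,Q)$ lying in $M\times M$ for a cyclic $\End_\ell(E)$-submodule $M$ of $E(\ell)$. The paper's unconditional result (Theorem~\ref{ellmain2}) uses an additional argument to show that the specific pairs $(nP,n\sigma(P))$ needed there fall into such a module up to finite index, so ``checking that the data $(X,Y,B)$ fit the hypotheses'' understates what remains.
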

Conjecture~\ref{vojtaconj} is a special case of a much more
general set of conjectures made by Vojta -- see \cite{Vo} for details.

This enables us to deduce an analogue of Theorem~\ref{main} for
elliptic curves.  As in the previous section, it will be convenient to list the notation used:\\

\begin{tabular}{ll}
$k$ & Fixed number field \\
$\ell/k$ & Fixed nontrivial extension of $k$ \\
$L$ &  Galois closure of $\ell$ over $k$ \\
$\Gal(L/k)$ & Galois group of $L$ over $k$\\
$\O_k$ & Ring of integers of $k$\\
$S$ & \parbox[t]{5in}{Fixed finite set of places of $L$ consisting of:
\begin{itemize}
\item The archimedean places of $L$
\item The places of $L$ ramified over $k$
\end{itemize}} \\
$\O_{L,S}$ & The ring of $S$-integers of $L$ \\
$E$ & Fixed elliptic curve given by a Weierstrass equation \\
& $y^2=x^3+ax+b$, $a,b\in \O_k$ \\
$E(\ell)^{\nu\sigma}$ & For $\nu \in\Aut(E)$ and $\sigma\in \Gal(L/k)$, the subgroup of \\ 
& points $x\in E(\ell)$ satisfying $\nu\sigma(x)=x$\\
$D$ & Fixed effective and non-trivial $\ell$-rational divisor on $E$ \\
$D_1,\ldots, D_N$ & The irreducible components of $D$ over $L$\\
$I_D(P)$ & Ideal associated to $D$ and $P$ (see Definition \ref{defidp}) \\ 
$J_D(P)$ & \parbox[t]{5in}{The smallest divisor ideal of $I_D(P)$ supported on
primes \\ $\mathfrak{p}$ of $\O_{\ell}$ with $[\O_\ell/\mathfrak{p}:
\O_k/(\O_k\cap\mathfrak{p})]>1$}\\
$N(I)$ & Absolute norm of an ideal $I$ of $\O_\ell$\\
$H_D(P)$ & Multiplicative height function on $E$ corresponding to $D$ \\
$h_D(P)$ & Logarithm of $H(P)$: $h_D(P)=\log H_D(P)$.
\end{tabular}

\vspace{.1in}

We will also need the following definitions.
\begin{dfn}
\label{defidp}
Let $E:y^2=x^3+ax+b$, $a,b\in \O_k$, be an elliptic curve.  Let $L$ be a number field containing $k$ and let $P, Q\in E(L)$, $P\neq Q$.  Let $P-Q=(x_0,y_0)\in E(L)$.  Define 
\begin{equation*}
I_Q(P)=\prod_{\mathfrak{p}\subset \O_L} \mathfrak{p}^{\max\{-\frac{1}{2}\ord_{\mathfrak{p}}x_0,0\}},
\end{equation*}
where $\mathfrak{p}$ runs over all (finite) primes of $\O_L$ (this is well-defined, independent of $L$, if we identify ideals $\mathfrak{a}\subset \O_L$ and $\mathfrak{a}\O_{L'}$, when $L\subset L'$).  If $D=\sum_{i=1}^nQ_i$, $Q_i\in E(\kbar)$, is a nontrivial effective divisor on $E$, then we define
\begin{equation*}
I_D(P)=\prod_{i=1}^n I_{Q_i}(P).
\end{equation*}
\end{dfn}

\begin{dfn}
Let $P\in E(\ell), P\not\in\mbox{Supp}(D)$.  We define the height of
$P$ with respect to degree one primes of $\ell/k$ by
\begin{equation*}
h_{D,\deg_1(\ell/k)}(P)=\sum_{\substack{v\in M_k}}\sum_{\substack{w\in M_\ell\\w|v\\ f_{w/v}=1}}h_{D,w}(P),
\end{equation*}
where $h_{D,w}$ denotes a local Weil height with respect to $D$ and
$w$ and $f_{w/v}$ is the inertia degree of $w$ over $v$.  Similarly, define
\begin{equation*}
h_{D,\deg_{>1}(\ell/k)}(P)=\sum_{\substack{v\in M_k}}\sum_{\substack{w\in M_\ell\\w|v\\ f_{w/v}>1}}h_{D,w}(P).
\end{equation*}
\end{dfn}

Note that, as in the previous section, we have
\begin{equation*}
h_{D,\deg_1(\ell/k)}(P)+h_{D,\deg_{>1}(\ell/k)}(P)=h_D(P)+O(1).
\end{equation*}

For $P\in E(\ell)$ and $D$ a divisor on $E$ defined over $\ell$, the norm $N(I_D(P))$ is essentially just the nonarchimedean part of the (relative) height $H_{D,\ell}(P)=H_{D}(P)^{[\ell:\mathbb{Q}]}$ and $\log N(J_D(P))=[\ell:\Q]h_{D,\deg_{>1}(\ell/k)}(P)$ (up to $O(1)$).  We will assume the local heights are chosen so that this last statement is an equality.

We can now state the following theorem, which in the simplest case where $\ell/k$ is Galois, says, roughly, that the
height of $P$ with respect to $D$ is ``mostly'' supported on the degree one primes of
$\ell/k$, unless the ideal $I_D(P)$ is coming from a proper subfield of $\ell$.

\begin{thm}\label{ellmain}
Let $\epsilon>0$.  Assume that Conjecture~\ref{vojtaconj} holds.  Then for every $P\in E(\ell)$,  either
\begin{enumerate}
\item 
\label{elleqn}
\begin{equation*}
\frac{1}{[\ell:\Q]}\log N(J_D(P))=h_{D,\deg_{>1}(\ell/k)}(P)<\epsilon h_D(P),
\end{equation*}
or\\
\item 
\label{elleqnb}
for some $i$,
\begin{equation*}
I_{D_i}(P)\O_{L,S}=\mathfrak{a}\O_{L,S}
\end{equation*}
for some ideal $\mathfrak{a}\subset \O_{k'}$, where $k'$ is a proper subfield of $L$ not containing $k$ (in particular, if $\ell/k$ is Galois, $\mathfrak{a}$ is contained in a proper subfield of $\ell$).
\end{enumerate}
The set of points in $E(\ell)$ not satisfying \eqref{elleqn} is contained in a finite union of cosets in $E(\ell)$ of the form
\begin{equation*}
T=\cup_{i=1}^m P_i+E(\ell)^{\nu_i\sigma_i},
\end{equation*}
where $P_i\in E(\ell)$, $\sigma_i\in \Gal(L/k)\setminus \Gal(L/\ell)$, and $\nu_i\in \Aut(E)$ for $i=1,\ldots, m$.
\end{thm}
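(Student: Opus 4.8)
The plan is to transport the proof of Theorem~\ref{main} to the abelian surface $E\times E$: Conjecture~\ref{vojtaconj} will play the role that Lemma~\ref{vojta} played for $\G_m^2$, and Faltings' theorem (the Mordell--Lang statement for curves in abelian varieties) together with a ``finite order of $\sigma$'' argument will replace the elementary classification of translated subtori.

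First I would set $U=\{P\in E(\ell):h_{D,\deg_{>1}(\ell/k)}(P)\ge\epsilon h_D(P)\}$ and repeat the opening decomposition-group computation from the proof of Theorem~\ref{main}: after enlarging $S$ so the relevant primes are unramified, a prime $\mathfrak q$ of $\O_L$ lying over a prime $w$ of $\ell$ with $[\O_\ell/w:\O_k/(w\cap\O_k)]>1$ has $|D(\mathfrak q/(w\cap\O_k))|>|D(\mathfrak q/w)|$, so $\mathfrak q$ is fixed by some $\sigma$ in $\Gal(L/k)\setminus\Gal(L/\ell)=\{\sigma_1,\dots,\sigma_m\}$. Writing $J_r'(P)$ for the $\sigma_r$-invariant part of $J_D(P)\O_{L,S}$, one gets $J_D(P)\O_{L,S}\mid J_1'(P)\cdots J_m'(P)$, so a pigeonhole argument gives $U=\bigcup_{r=1}^m U_r$ with $\log N(J_r'(P))\ge\frac{\epsilon}{m}[L:\Q]h_D(P)$ on $U_r$.

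The heart of the argument is then, for each fixed $r$: since $E$ is defined over $k$, $\sigma_r$ acts on $E(L)$ by a group automorphism and $\sigma_r(I_Q(P))=I_{\sigma_rQ}(\sigma_rP)$, and every prime $\mathfrak q\mid J_r'(P)$ is $\sigma_r$-fixed, hence divides $\gcd\bigl(I_Q(P),I_{\sigma_rQ}(\sigma_rP)\bigr)$ for some $Q\in\Supp D$. The norm of such a gcd is, up to archimedean terms and $O(1)$, equal to $[L:\Q]$ times a Weil height $h_Y$ of the point $(P,\sigma_rP)\in(E\times E)(L)$ relative to the exceptional divisor $Y$ of the blow-up $\pi\colon X\to E\times E$ of the finite set $\{(Q,\sigma_rQ):Q\in\Supp D\}$. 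Applying Conjecture~\ref{vojtaconj} over $L$ with a small parameter $\epsilon'$, and using $h(\sigma_rP)=h(P)+O(1)$ and $h_D\asymp(\deg D)h$ for an ample height $h$ on $E$, I would get $\frac{\epsilon}{m}(\deg D)h(P)\le 2\epsilon'h(P)+O(1)$ whenever $P\in U_r$ and $(P,\sigma_rP)\notin\pi(Z_r(\epsilon'))$; taking $\epsilon'$ small and applying Northcott's theorem on $E(\ell)$, all but finitely many $P\in U_r$ have $(P,\sigma_rP)$ in the proper Zariski-closed set $W_r:=\pi(Z_r(\epsilon'))$. Now Faltings' theorem, applied to the finitely generated group $G_r=\{(Q,\sigma_rQ):Q\in E(\ell)\}\subset(E\times E)(L)$, puts $G_r\cap W_r$ into finitely many cosets $(P_i,\sigma_rP_i)+(G_r\cap B_i)$, with $B_i\subset E\times E$ an abelian subvariety of dimension $\le 1$. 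The zero-dimensional cases (and the degenerate $E\times\{0\}$, $\{0\}\times E$) give single points; a genuine one-dimensional $B_i=\{(x,y):\beta(y)=\alpha(x)\}^0$ forces $\sigma_r$ to act on $H_i:=\{Q\in E(\ell):(Q,\sigma_rQ)\in B_i\}$ as multiplication by $\mu=\alpha\beta^{-1}\in\End(E)\otimes\Q$, and since $\sigma_r$ has finite order $\mu$ is a root of unity there, which — exactly as ``$a=\pm b$'' forces $|a/b|=1$ in the proof of Theorem~\ref{main}, with a short extra argument in the CM case about which roots of unity are expressible as ratios of endomorphisms of the relevant order — gives $\mu=\nu^{-1}$ for some $\nu\in\Aut(E)$ and hence $H_i\subseteq E(\ell)^{\nu\sigma_r}$. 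Assembling these cosets over $r$ and $i$ yields a set $T=\bigcup_i\bigl(P_i+E(\ell)^{\nu_i\sigma_i}\bigr)$ of the asserted form with $U\setminus T$ finite.

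Finally I would verify \eqref{elleqnb} on $T$ and absorb the finite leftover. On a coset $P_i+E(\ell)^{\nu\sigma_r}$ we have $\sigma_r(P)=\nu^{-1}(P)+c$ with $c=\sigma_r(P_i)-\nu^{-1}(P_i)$, so $I_{\sigma_rQ_j}(\sigma_rP)=I_{\nu(\sigma_rQ_j-c)}(P)$; rerunning the estimate of the previous paragraph on the coset, unless $Q_j=\nu(\sigma_rQ_j-c)$ for some $j$ the ideals $\gcd(I_{Q_j}(P),I_{\sigma_rQ_j}(\sigma_rP))$ stay bounded along the coset and \eqref{elleqn} fails there only finitely often, while if $Q_j=\nu(\sigma_rQ_j-c)$ then $\sigma_r(I_{D_i}(P))=I_{D_i}(P)$ for all $P$ in the coset (with $D_i$ the component at $Q_j$), so after enlarging $S$ to a set $S'$ of places of $L$ for which $I_{D_i}(P)\O_{L,S'}$ is supported on primes unramified over the fixed field $k'$ of $\sigma_r$, this $\sigma_r$-invariant ideal is the extension of an ideal $\mathfrak a$ of $\O_{k'}$ with $k'$ a proper subfield of $L$ not containing $\ell$ — which is \eqref{elleqnb}. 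Enlarging $S'$ once more so that the finitely many points of $U\setminus T$ have $I_{D_i}(P)\O_{L,S'}=(1)\O_{L,S'}$ finishes the proof. I expect the main obstacle to be the third paragraph: making the bookkeeping that turns the failure of \eqref{elleqn} into a genuinely large value of $h_Y$ on the correct blow-up precise enough for Conjecture~\ref{vojtaconj} to apply, and then extracting from Vojta's abstract exceptional set — via Faltings' theorem and the finiteness of $\ord(\sigma_r)$ — abelian subvarieties whose slopes are forced to be automorphisms of $E$, the last point being delicate exactly when $E$ has complex multiplication.
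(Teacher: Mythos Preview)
Your proposal is correct and follows essentially the same route as the paper's own proof: the same decomposition-group pigeonhole into sets $U_r$, Conjecture~\ref{vojtaconj} applied to the graph $\{(P,\sigma_rP)\}$ on a blow-up of $E\times E$, Faltings' theorem to reduce the exceptional locus to translates of one-dimensional abelian subvarieties, the same finite-order/root-of-unity eigenvalue argument to force the slope into $\Aut(E)$, and the same divisor-matching verification of \eqref{elleqnb}. The only cosmetic discrepancy is that the statement uses the fixed set $S$ (which already contains all places of $L$ ramified over $k$), so your two enlargements to an $S'$ are unnecessary---a $\sigma_r$-invariant $S$-ideal already descends to the fixed field $k'$, and the finitely many leftover points of $U\setminus T$ are absorbed into $T$ as additional cosets rather than handled by trivialising their ideals.
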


\noindent
{\it Proof:} \/ 
Let $D_{\rm red}$ be the reduced divisor associated to $D$.  Then for some positive integer $c$, $D<cD_{\rm red}$ and we have $h_D<ch_{D_{\rm red}}+O(1)$ and $h_{D,\deg_{>1}(\ell/k)}<ch_{D_{\rm red},\deg_{>1}(\ell/k)}+O(1)$.  So without loss of generality we may assume that $D$ is a reduced divisor.
Let
\begin{equation*}
U=\{P\in E(\ell)\mid h_{D,\deg_{>1}(\ell/k)}(P)\geq \epsilon h_D(P)\}.
\end{equation*}

Let $L$ be a Galois closure of $\ell/k$.  Let $w'\in M_L$ lie above $w\in M_\ell$ and $v\in M_k$.  As in the proof of Theorem \ref{main}, if $f_{w/v}>1$, then there exists $\sigma\in \Gal(L/k)\setminus \Gal(L/\ell)$ such that $\sigma(w')=w'$.  Let $\Gal(L/k)\setminus \Gal(L/\ell)=\{\sigma_1,\ldots,\sigma_m\}$.  For $i=1,\ldots, m$, let
\begin{equation*}
h_{D,\deg_{>1}(L/k)}^{(i)}(P)=\sum_{\substack{v\in M_k}}\sum_{\substack{w\in M_L\\w|v\\ f_{w/v}>1\\\sigma_i(w)=w}}h_{D,w}(P).
\end{equation*}
Then
\begin{equation*}
h_{D,\deg_{>1}(\ell/k)}(P)\leq \sum_{i=1}^m h_{D,\deg_{>1}(L/k)}^{(i)}(P).
\end{equation*}
Let
\begin{equation*}
U_i=\left\{P\in U\mid h_{D,\deg_{>1}(L/k)}^{(i)}(P)\geq \frac{\epsilon}{m} h_D(P)\right\}.  
\end{equation*}
Then $U\subset\cup_{i=1}^m U_i$.  Let $r\in \{1,\ldots, m\}$.  If $w\in M_L$ and $\sigma_r(w)=w$, then $h_{D,w}(P)=h_{\sigma_r(D),w}(\sigma_r(P))$ and so
\begin{equation*}
\min \{h_{D,w}(P),h_{\sigma_r(D),w}(\sigma_r(P))\}=h_{D,w}(P).
\end{equation*}
Let $\pi\colon X\to E\times E$ be the morphism obtained by blowing up the points in $D\times \sigma_r(D)\subset E\times E$ and let $Y$ be the
exceptional divisor of $\pi$.  By well-known properties of heights, for $(P,Q)\not\in D\times \sigma_r(D)$ and $w\in M_L$, we can choose
\begin{equation*}
h_{Y,w}(\pi^{-1}(P,Q))=\min\{h_{D,w}(P),h_{\sigma_r(D),w}(Q)\}.
\end{equation*}

Let $V_r=\{(P,\sigma_r(P))\mid P\in U_r\}$.  It follows that for $(P,\sigma_r(P))\in V_r$, we have
\begin{align*}
h_Y(\pi^{-1}(P,\sigma_r(P)))&\geq h_{D,\deg_{>1}(L/k)}^{(r)}(P)\geq \frac{\epsilon}{m} h_D(P)\\
&>\frac{\epsilon}{2m}(h(P)+h(\sigma_r(P))+O(1).
\end{align*}
Then by Conjecture \ref{vojtaconj} $V_r$ is contained in a proper Zariski closed subset of $E\times E$.  Let $C$ be a positive-dimensional component of the Zariski closure of $V_r$.  Then $C$ is a curve with
infinitely many rational points on it.  By Faltings' theorem, $C$ is a translate of a one-dimensional abelian subvariety $E'$ of $E\times E$.

Any irreducible one-dimensional abelian subvariety  of $E\times E$ must
be an elliptic curve isogenous to $E$, via projection onto $E$.   Since $E'$ is clearly
not a fibre of either of the two projection maps, there are
two isogenies $\phi,\psi\colon E'\to E$ induced by the two projection
maps, with dual isogenies $\hat{\phi}$ and $\hat{\psi}$ from $E$ to
$E'$.  If $R=(P,Q)\in E'\subset E\times E$, then $\hat{\phi}\phi(R)=\hat{\phi}(P)=(\deg \hat{\phi})R$ and similarly $\hat{\psi}(Q)=(\deg \hat{\psi})R$.  Thus, $E'$ is contained in the set $\{(P,Q)\in E\times E\mid
(\deg \hat{\psi})\hat{\phi}(P)=(\deg \hat{\phi})\hat{\psi}(Q)\}$.  Composing with an isogeny to $E$ we find that there are nonzero endomorphisms
$f$ and $g$ of $E$ such that $E'\subset \{(P,Q)\in E\times E\mid f(P)=g(Q)\}$.  Note that if $(P_0,\sigma_r(P_0)),(P,\sigma_r(P))\in V_r\cap C$ then $(P-P_0,\sigma_r(P-P_0))\in E'$.  It follows that there are points of the form $(P,\sigma_r(P))\in E'$ with $P\in E(\ell)$ such that $f(P)=g(\sigma_r(P))$. 

Let $K=\mbox{End}(E)\otimes\Q$.  Then $\sigma_r$ is an element of a
finite group acting on the finite-dimensional $K$-vector space
$V=E(L){\otimes}_{\mbox{\tiny End(E)}} K$.  Thus, the eigenvalues of the action
of $\sigma_r$ must be roots of unity.  But from the above, $f/g$ is an eigenvalue of
$\sigma_r$.  So we deduce that
$f/g\in K$ is a root of unity.  Since $K$ is contained in a quadratic
extension of $\Q$, this means that $f/g\in\{\pm 1,\pm
i,\pm\gamma,\pm\gamma^2\}$, where $\gamma$ denotes a primitive sixth
root of unity.  Write $g=\nu f$.  Composing both sides with the dual endomorphism to $f$, we may assume that $f=m$, where $m$ is a positive integer. Then for $(P,\sigma_r(P)),(P_0,\sigma_r(P_0))\in V_r\cap C$, we have $m(P-P_0)=\nu\sigma_r(m(P-P_0))$.  This implies that $U_r$ is contained in finitely many cosets of the form $P_i+E(\ell)^{\nu_i\sigma_r}$ in $E(\ell)$, where $P_i\in E(\ell)$ and $\nu_i\in \Aut(E)$.  So the set of points in $E(\ell)$ not satisfying \eqref{elleqn} is contained in a set $T$ as in the theorem.

We now show that the set of points in the set $T$ not satisfying condition (a) satisfies condition \eqref{elleqnb}.  Let $D_1,\ldots, D_N$ be the irreducible components of $D$ over $L$.  Consider a coset in $E(\ell)$ of the form $P_r+E(\ell)^{\nu_r\sigma_r}$, $P_r\in E(\ell), \nu_r\in \Aut(E), \sigma_r\in \Gal(L/k)\setminus \Gal(L/\ell)$.  From the first part of the proof, we need only consider cosets such that for some $i$, some $\epsilon>0$, and infinitely many elements $P\in E(\ell)^{\nu_r\sigma_r}$, we have 
\begin{equation*}
\sum_{w\in M_L}\min\{h_{D_i,w}(P+P_r),h_{\sigma_r(D_i),w}(\sigma_r(P+P_r))\}> \epsilon h(P).
\end{equation*}
Let $\phi:E\to E$ be the morphism $\phi(P)=\nu_r^{-1}P+\sigma_r(P_r)$.  Since $\sigma_r(P+P_r)=\nu_r^{-1}P+\sigma_r(P_r)$ for $P\in E(\ell)^{\nu_r\sigma_r}$, we have (up to $O(1)$)
\begin{equation*}
h_{\sigma_r(D_i),w}(\sigma_r(P+P_r))=h_{\sigma_r(D_i),w}(\phi(P))=h_{\phi^*\sigma_r(D_i),w}(P).
\end{equation*}
Let $\tau$ be translation by $P_r$.  So $h_{D_i,w}(P+P_r)=h_{\tau^*D_i,w}(P)+O(1)$.  So for infinitely many $P\in E(\ell)$,
\begin{equation}\label{exceptions}
\sum_{w\in M_L}\min\{h_{\tau^*D_i,w}(P),h_{\phi^*\sigma_r(D_i),w}(P)\}> \epsilon h(P).
\end{equation}

If $\tau^*D_i$ and $\phi^*\sigma_r(D_i)$ have empty intersection, then as is well known, $\sum_{w\in M_L}\min\{h_{\tau^*D_i,w}(P),h_{\phi^*\sigma_r(D_i),w}(P)\}$ is bounded independent of $P$, contradicting inequality \eqref{exceptions}.  So $\tau^*D_i\cap \phi^*\sigma_r(D_i)\neq \emptyset$.  Since $D_i$ is irreducible over $L$, this implies that $\tau^*D_i=\phi^*\sigma_r(D_i)$.  

It follows from the definition that for any translation $\tau_0$ and any automorphism $\nu\in \Aut(E)$, $I_{D}(\tau_0(P))=I_{\tau_0^*D}(P)$ and $I_{D}(\nu P)=I_{\nu^*D}(P)$.  This implies that for all $P\in E(\ell)^{\nu_r\sigma_r}$,
\begin{align*}
\sigma_r (I_{D_i}(P+P_r))&=I_{\sigma_r(D_i)}(\sigma_r(P)+\sigma_r(P_r))=I_{\sigma_r(D_i)}(\phi(P))=I_{\phi^*\sigma_r(D_i)}(P)\\
&=I_{\tau^*D_i}(P)\\
&=I_{D_i}(P+P_r).
\end{align*}
So $\sigma_r$ fixes the ideal $I_{D_i}(P_r+P)$, $P_r+P\in P_r+E(\ell)^{\nu_r\sigma_r}$, which implies that $I_{D_i}(P+P_r)\O_{L,S}=\mathfrak{a}\O_{L,S}$ for some ideal $\mathfrak{a}$ of $\O_{k'}$, where $k'$ is the fixed field of $\sigma_r$.

\qed

\vspace{.1in}

If we restrict to cyclic subgroups of $E(\ell)$, we obtain the following simpler version of Theorem \ref{ellmain}.

\begin{cor}\label{ellcor}
Let $P\in E(\ell)$ and $\epsilon>0$.  If Conjecture~\ref{vojtaconj} holds, then either
\begin{equation*}
h_{D,\deg_{>1}(\ell/k)}(nP) <\epsilon h_D(nP)
\end{equation*}
for all but finitely many integers $n$, or there exists a proper
subfield $k'\subsetneqq\ell$ of $\ell$, a positive integer $m$, an
elliptic curve $E'/k'$, and an isomorphism $\phi:E\to E'$ over $\ell$
such that $\phi(mP)$ is a $k'$-rational point on $E'$.
\end{cor}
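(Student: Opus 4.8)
The plan is to apply Theorem~\ref{ellmain} to the multiples $nP$, isolate a single exceptional coset, and then manufacture the twist in the statement directly out of the coordinates of one multiple of $P$.

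First I would suppose the first alternative fails, so that $h_{D,\deg_{>1}(\ell/k)}(nP)\ge\epsilon h_D(nP)$, i.e.\ \eqref{elleqn} fails for $nP$, for infinitely many $n$. Theorem~\ref{ellmain} (whose hypothesis Conjecture~\ref{vojtaconj} we are assuming) then places each such $nP$ in the finite union $T=\bigcup_{i=1}^m\bigl(P_i+E(\ell)^{\nu_i\sigma_i}\bigr)$, so by pigeonhole one coset $P_i+E(\ell)^{\nu\sigma}$ — write $\sigma:=\sigma_i\in\Gal(L/k)\setminus\Gal(L/\ell)$, $\nu:=\nu_i\in\Aut(E)$ — contains $n_1P$ and $n_2P$ for some $n_1\ne n_2$. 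As $E(\ell)^{\nu\sigma}$ is a subgroup, $Q:=m_0P$ with $m_0:=|n_1-n_2|$ lies in it, i.e.\ $\sigma(Q)=\nu^{-1}(Q)$. If $P$ is torsion the second alternative is immediate — take $m=\ord(P)$, $k'=k$, $E'=E$, $\phi=\id$, since then $\phi(mP)=O$ — so I would assume $P$, hence $Q$, non-torsion; in particular $Q$ is affine and $y(Q)\ne0$.

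The heart of the argument is then the twist. Writing the automorphism $\nu^{-1}$ of $E:y^2=x^3+ax+b$ as $(x,y)\mapsto(u^2x,u^3y)$ for a root of unity $u$, so $\sigma(x(Q))=u^2x(Q)$ and $\sigma(y(Q))=u^3y(Q)$, I would set $\alpha:=x(Q)/y(Q)$ when $x(Q)\ne0$ and $\alpha:=y(Q)$ otherwise (the exceptional case forces $ab\ne0$ and $\nu=\pm1$, since $x(Q)=0$ at a non-torsion point forces $a,b\ne0$). Then $\alpha\in\ell^*$ and a one-line computation gives $\sigma(\alpha)=u^{-1}\alpha$, so $\alpha^{\ord(\nu)}$ is fixed by $\sigma$ and lies in $k':=\ell\cap L^{\langle\sigma\rangle}$, a subfield that is \emph{properly} contained in $\ell$ precisely because $\sigma\notin\Gal(L/\ell)$. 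I take $E'$ to be the twisted curve $Y^2=X^3+a\alpha^4X+b\alpha^6$ and $\phi\colon E\to E'$ the isomorphism $(x,y)\mapsto(\alpha^2x,\alpha^3y)$, which is defined over $\ell$. Since $\Aut(E)$ is $\mu_2$, $\mu_4$, or $\mu_6$ according as $ab\ne0$, $b=0$, or $a=0$, one has $\ord(\nu)\mid 4$ when $a\ne0$ and $\ord(\nu)\mid 6$ when $b\ne0$, so $a\alpha^4,b\alpha^6\in k'$ and $E'$ is defined over $k'$. Finally, because $\sigma$ scales $\alpha^2,x(Q),\alpha^3,y(Q)$ by $u^{-2},u^2,u^{-3},u^3$ respectively, $\sigma$ fixes $\phi(Q)=(\alpha^2x(Q),\alpha^3y(Q))$; its coordinates lie in $\ell$ and so are also $\Gal(L/\ell)$-fixed, whence $\phi(Q)\in E'\bigl(L^{\langle\sigma,\Gal(L/\ell)\rangle}\bigr)=E'(k')$. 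With $m=m_0$ this is exactly the second alternative.

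The step I expect to be the main obstacle is the descent to a subfield \emph{of $\ell$}: the obvious field $L^{\langle\sigma\rangle}$ on which $\phi(Q)$ is naturally defined need not be contained in $\ell$, since $\sigma$ need not normalize $\ell$. Choosing $k'=\ell\cap L^{\langle\sigma\rangle}$ and reading the twisting parameter $\alpha$ off $Q$ itself (so that $\phi(Q)$ is visibly fixed by both $\sigma$ and $\Gal(L/\ell)$) is what resolves this, and the bookkeeping that makes it go through is the interplay between $\ord(\nu)$ and $|\Aut(E)|$. Verifying the $\sigma$-invariance of $\phi(Q)$ uniformly across the quadratic, quartic, and sextic cases (the latter two arising only for $j(E)\in\{0,1728\}$) and disposing of the degenerate possibility $x(Q)=0$ are the remaining items needing care.
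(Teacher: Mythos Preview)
Your argument is correct and follows the same outline as the paper's proof: negate alternative (a), apply Theorem~\ref{ellmain} and pigeonhole to obtain a nonzero $m$ with $\sigma(mP)=\nu\,mP$ for some $\sigma\in\Gal(L/k)\setminus\Gal(L/\ell)$ and $\nu\in\Aut(E)$, and then realize $mP$ on a twist over $\ell\cap L^{\langle\sigma\rangle}$. The paper dispatches this last step in a single sentence (``From this it follows that $mP$ is a point on a twist of $E$, defined over $k'\cap\ell$, where $k'$ is the fixed field of $\sigma$''), whereas you make the construction explicit: you read the twisting parameter $\alpha$ off the coordinates of $Q=mP$, exhibit $\phi$ and $E'$ concretely, and verify using the divisibility $\ord(\nu)\mid|\Aut(E)|$ that both $E'$ and $\phi(Q)$ descend to $\ell\cap L^{\langle\sigma\rangle}$. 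Your treatment of the degenerate cases ($P$ torsion, or $x(Q)=0$, the latter forcing $ab\neq0$ and hence $\nu^2=\id$) is sound. So the route is the same; what your version buys is that the twist step is actually carried out rather than asserted.
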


\begin{proof}
Suppose that for infinitely many $n$, $h_{D,\deg_{>1}(\ell/k)}(nP) <\epsilon h_D(nP)$.  It follows from Theorem \ref{ellmain} that for some $m>0$, $\sigma\in \Gal(L/k)\setminus \Gal(L/\ell)$, and $\nu\in \Aut(E)$, we have $mP\in E(\ell)^{\nu^{-1}\sigma}$, or $\sigma(mP)=\nu mP$.  From this it follows that $mP$ is a point on a twist of $E$, defined over $k'\cap \ell$, where $k'$ is the fixed field of $\sigma$.
\end{proof}

At the time of writing, Conjecture~\ref{vojtaconj} is known only in
the following special case.  See \cite{McK} for a proof, and \cite{Si}
for a discussion of the implications of Vojta's Conjecture in this
context.

\begin{thm}[McKinnon]
\label{Davidthm}
Let $E$ be an elliptic curve over a number field $\ell$.  Let
$R=\End_\ell(E)$.  Let $M$ be a cyclic $R$-submodule of $E(\ell)$.
Then Conjecture~\ref{vojtaconj} holds for $(P,Q)\in M\times M\subset
(E\times E)(\ell)$; i.e., in the notation of
Conjecture~\ref{vojtaconj}, there exists a proper Zariski closed
subset $Z(\epsilon)$ of $X$ such that for every $(P,Q)\in M\times
M-\pi(Z(\epsilon))$, we have
\[h_Y(\pi^{-1}(P,Q))\leq \epsilon (h(P)+h(Q)) + O(1),\]
where $h_Y$ is a logarithmic height function associated to the
exceptional divisor on the blowup $X$ of $E\times E$ at a finite set of points and $h$
is any fixed ample logarithmic height on $E$.
\end{thm}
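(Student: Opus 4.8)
The plan is to prove the elliptic analogue of Lemma~\ref{vojta} for pairs of points lying in the cyclic $R$-module $M$, running the Corvaja--Zannier strategy through Schmidt's Subspace Theorem. The first reduction is geometric. Since the exceptional divisor of a blowup along a finite set is the sum of the exceptional divisors over the individual points, and since one is always free to enlarge $Z(\epsilon)$, it suffices to treat a single blown-up point $(R_1,R_2)\in E(\kbar)\times E(\kbar)$. Taking local coordinates at $(R_1,R_2)$ given by N\'eron uniformizers for $(R_1)$ on the first factor and $(R_2)$ on the second gives the standard formula
\begin{equation*}
h_{Y}(\pi^{-1}(P,Q))=\sum_{v}\min\{\lambda_{(R_1),v}(P),\lambda_{(R_2),v}(Q)\}+O(1)
\end{equation*}
for all $(P,Q)$ off $(\{R_1\}\times E)\cup(E\times\{R_2\})$, where the $\lambda_{(R_i),v}$ are N\'eron local heights. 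So everything reduces to showing that this $\min$-sum is at most $\epsilon(h(P)+h(Q))+O(1)$ for $(P,Q)\in M\times M$ outside a proper Zariski-closed subset.

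Next I would exploit the cyclic structure. Fix a generator $P_0$ of $M$ over $R$; every $(P,Q)\in M\times M$ has the form $([\gamma_1]P_0,[\gamma_2]P_0)$, and after extracting a gcd (replacing elements of $R$ by ideals if $R$ is not a PID) one obtains a coprime pair $(\alpha,\beta)\in R^2$ with the \emph{exact} relation $[\beta]P=[\alpha]Q$, together with finitely many degenerate pairs in which $P$ or $Q$ is a torsion point of $M$. Thus $M\times M$ is contained in the countable union of the elliptic curves $C_{\alpha,\beta}=\{(x,y):[\beta]x=[\alpha]y\}\subset E\times E$, parametrized by $t\mapsto([\alpha]t,[\beta]t)$, together with finitely many fibres $\{P_i\}\times E$ and $E\times\{Q_j\}$. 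Only finitely many of the curves $C_{\alpha,\beta}$ with $(\alpha,\beta)$ coprime pass through $(R_1,R_2)$ — none at all unless $R_1,R_2$ are $R$-dependent — and I would put the strict transforms of those into $Z(\epsilon)$. On every remaining $C_{\alpha,\beta}$ the divisor $Y$ restricts to zero, so the $\min$-sum is bounded there; the difficulty is that the bound produced this way grows with $(\alpha,\beta)$, so one cannot simply argue one curve at a time.

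To remove that dependence I would apply the Subspace Theorem a single time to the whole finitely generated group $M\times M$, rather than curve by curve — this is the elliptic counterpart of the argument underlying Lemma~\ref{vojta}. Writing points of $M$ through their coordinates relative to a fixed generating set, and using Riemann--Roch on $E$ to manufacture enough rational functions vanishing to prescribed order at $R_1$ and at $R_2$, the failure of the target inequality forces $(P,Q)$ into one of finitely many proper linear subspaces of the coordinate space, hence into one of finitely many translates of proper algebraic subgroups of $E\times E$ — that is, finitely many of the curves $C_{\alpha,\beta}$ together with finitely many fibres. These too go into $Z(\epsilon)$; on a fibre the $\min$-sum degenerates to a one-variable local-height sum towards a single fixed point, which Roth's theorem on $E$ bounds by $\epsilon h+O(1)$, so such fibres cause no trouble.

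I expect the Subspace-Theorem step to be the main obstacle: one must carry out the estimate uniformly over the infinite family $\{C_{\alpha,\beta}\}$ on an elliptic curve, where (unlike $\G_m$) there are no global coordinates compatible with the group law, so the coordinate functions have to be built by hand and fed into Schmidt's theorem in a way that keeps the implied constants independent of $(\alpha,\beta)$. It is precisely to make a \emph{single} application of Schmidt's theorem possible, and to guarantee that the resulting exceptional set is genuinely Zariski-closed rather than merely thin, that one needs $M$ to be a cyclic $R$-module — the structural surrogate here for the $S$-unit group inside $\G_m$.
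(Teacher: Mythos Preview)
The paper does not prove this theorem: the sentence introducing it reads ``See \cite{McK} for a proof,'' and no argument is supplied. The result is quoted from the literature, so there is no in-paper proof to compare your proposal against.

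That said, your proposed route has a genuine gap at precisely the point you flag. Applying Schmidt's Subspace Theorem to points on $E\times E$ is not merely ``the main obstacle'' to be worked around --- it is the reason Conjecture~\ref{vojtaconj} is still a conjecture rather than a theorem. The Corvaja--Zannier method on $\G_m$ works because the group law is multiplication, so monomials in $S$-units are again coordinates and one can assemble honest linear forms to feed into Schmidt. On an elliptic curve the group law is not linear in any projective embedding, and ``using Riemann--Roch to manufacture rational functions'' yields sections of line bundles on $E$, not linear forms in anything that records addition on $E(\ell)$. No Subspace-Theorem argument of the shape you describe is known for abelian varieties; if one existed it would prove Conjecture~\ref{vojtaconj} outright, and the cyclic-$R$-module hypothesis would be irrelevant.

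McKinnon's actual proof in \cite{McK} takes a quite different and more elementary route, using no Diophantine approximation beyond properties of the canonical height. In fact the curve-by-curve strategy you set up in your second paragraph and then abandon is essentially what he makes work. The N\'eron--Tate height on $E\times E$ is an exact positive-definite quadratic form, so on $C_{\alpha,\beta}$ the ample height of $([\alpha]t,[\beta]t)$ is $(|\alpha|^2+|\beta|^2)\hat h(t)$ with \emph{no} error term, while the intersection number $\tilde C_{\alpha,\beta}\cdot Y$ is bounded independently of $(\alpha,\beta)$. The comparison between $h_Y$ and the canonical height is made once on the fixed surface $X$, so the implied $O(1)$ is a single constant, not one per curve; this is exactly the uniformity you were worried about. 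One then places the finitely many $C_{\alpha,\beta}$ with $|\alpha|^2+|\beta|^2\le c/\epsilon$ into $Z(\epsilon)$. The cyclicity hypothesis is used solely to guarantee that every pair in $M\times M$ lies on some such $C_{\alpha,\beta}$.
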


\begin{thm}\label{ellmain2}
Let $E$ be an elliptic curve over a number field $k$ with complex
multiplication.  Let $\ell$ be the compositum of $k$ with the
imaginary quadratic field $\End(E)\otimes \mathbb{Q}$.  Let $D$ be a nontrivial effective divisor on $E$ defined over $\ell$.  Let $P\in E(\ell)$ and $\epsilon>0$.  Then either 
\begin{equation*}
h_{D,\deg_{>1}(\ell/k)}(nP) <\epsilon h_D(nP)
\end{equation*}
for all but finitely many $n>0$, or there exists a positive integer $m$, an elliptic curve $E'/k$,
and an isomorphism $\phi:E\to E'$ over $\ell$ such that $\phi(mP)$ is
a $k$-rational point on $E'$.
\end{thm}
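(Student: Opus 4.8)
The plan is to obtain Theorem~\ref{ellmain2} as the version of Corollary~\ref{ellcor} that is valid in this case without assuming Conjecture~\ref{vojtaconj}: I would run the proof of Corollary~\ref{ellcor} --- that is, of Theorem~\ref{ellmain} --- essentially verbatim, checking that the one appeal to Conjecture~\ref{vojtaconj} can here be replaced by Theorem~\ref{Davidthm}. First dispose of the trivial case $K:=\End(E)\otimes\Q\subseteq k$: then $\ell=k$, every place of $\ell$ has inertia degree one over $k$, so $h_{D,\deg_{>1}(\ell/k)}\equiv 0$ and the first alternative holds. So assume $K\not\subseteq k$; then $[\ell:k]=2$, the extension $\ell=kK$ is Galois over $k$ with $\Gal(\ell/k)=\langle\sigma\rangle$, and in the notation preceding Theorem~\ref{ellmain} we have $L=\ell$, $\Gal(L/k)\setminus\Gal(L/\ell)=\{\sigma\}$, and the fixed field of $\sigma$ is $k$ (so the proper subfield ``$k'$'' appearing in Corollary~\ref{ellcor} becomes $k$ here). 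Since $j(E)\in k$, the ring class field of $\End(E)$ is contained in $\ell$, so $R:=\End_\ell(E)$ is the full CM order in $K$ and $\sigma$ acts on $R$ by complex conjugation.

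In the proof of Theorem~\ref{ellmain}, Conjecture~\ref{vojtaconj} is used only to conclude that the ``bad'' pairs $(Q,\sigma(Q))$ lie in a proper Zariski closed subset of $E\times E$; granting that, Faltings' theorem and the classification of one-dimensional algebraic subgroups of $E\times E$ finish the argument unconditionally. Restricting to $Q=nP$ as in Corollary~\ref{ellcor}, the bad pairs are among $(nP,\sigma(nP))=n(P,\sigma(P))$, $n\in\Z$, which form a cyclic $R$-submodule of $(E\times E)(\ell)$ under the diagonal action. The crux is to put these pairs in the shape demanded by Theorem~\ref{Davidthm}. If $R\,P+R\,\sigma(P)\subseteq E(\ell)$ has $R$-rank one --- i.e.\ $\sigma(P)$ lies in the $K$-line through $P$ in $E(\ell)\otimes\Q$ --- then, $R$ being an order in an imaginary quadratic field, $R\,P+R\,\sigma(P)$ contains a cyclic $R$-submodule $M$ of finite index; after replacing $n$ by a residue class modulo that index and absorbing the finitely many resulting torsion translates by passing to the blow-up of $E\times E$ at a suitable translate of $D\times\sigma(D)$, the pairs land in $M\times M$ and Theorem~\ref{Davidthm} applies directly. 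If instead $R\,P+R\,\sigma(P)$ has $R$-rank two, then $(P,\sigma(P))$ generates a cyclic $M_2(R)=\End_\ell(E\times E)$-module in $(E\times E)(\ell)$, and one invokes the corresponding analogue of Theorem~\ref{Davidthm} for the abelian surface $E\times E$, which should follow from McKinnon's method of proof. In either case one concludes, as in Theorem~\ref{ellmain}, that for all but finitely many $n$ either the first alternative holds at $nP$ or $mP\in E(\ell)^{\nu\sigma}$ for some $m\ge 1$, $\nu\in\Aut(E)$; in the latter case $mP$ is a point of a twist of $E$ defined over the fixed field $k$ of $\sigma$, which is the second alternative.

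The main obstacle I anticipate is precisely this reduction to the hypotheses of Theorem~\ref{Davidthm} (or its surface analogue): choosing the right ambient object --- a single copy of $E(\ell)$ when $R\,P+R\,\sigma(P)$ has rank one, the surface $E\times E$ when it has rank two --- and then carrying out the routine but delicate bookkeeping with finite-index subgroups, torsion translates, and translated blow-ups so that the chosen version of McKinnon's theorem genuinely applies. A secondary point needing care is that the endomorphism ratio $f/g$ coming out of the Faltings component is a unit in $R$, so that the exceptional set really is a union of cosets of subgroups of the special form $E(\ell)^{\nu\sigma}$; in this CM situation $\sigma$ acts only semilinearly on $E(L)\otimes_{\End(E)}K$, so the ``root of unity'' step in the proof of Theorem~\ref{ellmain} must be justified slightly differently here.
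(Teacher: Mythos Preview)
Your overall strategy is right, but there is a genuine gap in your handling of the ``rank-two'' case. You write that if $R\,P+R\,\sigma(P)$ has $R$-rank two, one should invoke ``the corresponding analogue of Theorem~\ref{Davidthm} for the abelian surface $E\times E$, which should follow from McKinnon's method of proof.'' That analogue is neither stated nor proved anywhere, and appealing to it is tantamount to assuming a new case of Vojta's Conjecture --- exactly what you are trying to avoid. This is the crux of the matter, and your proposal does not resolve it.

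The paper's proof sidesteps the problem entirely by showing that the rank-two case \emph{never occurs}. The missing observation is that $R[E(k)]$ has finite index in $E(\ell)$. Writing $\ell=k(\sqrt{N})$, any nonzero element $n\sqrt{N}\in R$ gives a $k$-rational isogeny from $E$ to its quadratic twist $E'$ over $\ell$; hence $\rk E(k)=\rk E'(k)$, and the standard decomposition $\rk E(\ell)=\rk E(k)+\rk E'(k)$ yields $\rk E(\ell)=2\rk E(k)=\rk_{\Z} R[E(k)]$. Consequently some multiple $mP$ lies in $R[E(k)]$, say $mP=\phi(Q)$ with $Q\in E(k)$ and $\phi\in R$; then $m\sigma(P)=(\sigma\phi)(Q)\in RQ$ as well. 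Thus $P$ and $\sigma(P)$ both lie, up to finite index, in the single cyclic $R$-module $RQ\subset E(\ell)$, and Theorem~\ref{Davidthm} (extended as you indicate to modules containing a cyclic $R$-submodule of finite index) applies directly to the pairs $(nP,n\sigma(P))$. Once you have this, the proof of Theorem~\ref{ellmain} and Corollary~\ref{ellcor} runs verbatim; the semilinearity concern you raise is not an obstruction.
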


\noindent
{\it Proof:} \/ If $\ell=k$ then the theorem is vacuous.  So suppose
that $\ell$ is a quadratic extension of $k$.  Let $R=\End(E)$.  First,
we note that $R[E(k)]$ has finite index in $E(\ell)$.  Indeed, as is
well-known \cite[Exerc.~ X-10.16]{Si2}, we have $\rk E(\ell)=\rk
E(k)+\rk E'(k)$, where $E'$ is a quadratic twist of $E$ over $\ell$.
If $\ell=k(\sqrt{N})$, $N\in \mathbb{\Z}$, then any element
$n\sqrt{N}\in R$, with $n$ a positive integer, induces an isogeny (over
$k$) between $E$ and a quadratic twist $E'$ of $E$ over $\ell$.  Thus,
$\rk E(k)=\rk E'(k)$ and we have $\rk E(\ell)=2\rk E(k)=\rk R[E(k)]$.

Next, we claim that Theorem \ref{Davidthm} actually holds under the slightly weaker assumption that $M$ contains a cyclic $R$-submodule $M'$ of finite index $m$.  Indeed, one easily reduces to considering the case where $X$ is the blow-up of $E\times E$ at the origin $(\O,\O)$ and $Y$ is the exceptional divisor.  The claim then follows by applying Theorem \ref{Davidthm} to $M'$ and from the facts $h_Y(\pi^{-1}(P,Q))\leq h_Y(\pi^{-1}(mP,mQ))+O(1)$, $(P,Q)\neq (\O,\O)$, and $h(mP)=m^2h(P)+O(1)$.

Let $m$ be the index of $R[E(k)]$ in $E(\ell)$.  Let $P\in E(\ell)$.
Then we have $mP=\phi(Q)$, for some $Q\in E(k)$ and some $\phi\in R$.  Let $\sigma$
be the unique non-identity element of $\Gal(\ell/k)$.  Then
$m\sigma(P)=\sigma(mP)=\sigma(\phi(Q))=(\sigma \phi)(Q)$, so $mP$ and $m\sigma(P)$ both
belong to the cyclic $R$-submodule $RQ$ of $E(\ell)$ generated by $Q$.  So $RQ$ has finite index in the subgroup of $E(\ell)$ generated by $RQ$, $P$, and $\sigma(P)$.  Then by our earlier claim, Conjecture~\ref{vojtaconj} holds for the
points $(nP,n\sigma(P))\in (E\times E)(\ell)$, $n\in \mathbb{Z}$.
But now the same proof as in Theorem~\ref{ellmain} and Corollary \ref{ellcor} works, completing
the proof.  \qed

\vspace{.1in}

\end{document}